\newcommandx{\unsure}[2][1=]{\todo[linecolor=red,backgroundcolor=red!25,bordercolor=red,#1]{#2}}
\newcommandx{\change}[2][1=]{\todo[linecolor=blue,backgroundcolor=blue!25,bordercolor=blue,#1]{#2}}
\newcommandx{\info}[2][1=]{\todo[linecolor=OliveGreen,backgroundcolor=OliveGreen!25,bordercolor=OliveGreen,#1]{#2}}
\newcommandx{\improvement}[2][1=]{\todo[linecolor=Plum,backgroundcolor=Plum!25,bordercolor=Plum,#1]{#2}}
\newcommandx{\thiswillnotshow}[2][1=]{\todo[disable,#1]{#2}}
\numberwithin{equation}{section}
\newtheorem{theorem}{Theorem}[section]
\newtheorem{lemma}[theorem]{Lemma}
\newtheorem{proposition}[theorem]{Proposition}
\newtheorem{corollary}[theorem]{Corollary}
\newtheorem{conjecture}[theorem]{Conjecture}
\newtheorem*{theoremX}{Theorem}
\theoremstyle{definition}
\newtheorem{definition}[theorem]{Definition}
\newtheorem{remark}[theorem]{Remark}
\numberwithin{equation}{section}
\newcommand{\reg}{\operatorname{reg} }
\newcommand{\areg}{\operatorname{areg} }
\newcommand{\gens}{\operatorname{mingens} }
\newcommand{\sd}{\operatorname{sdefect} }
\newcommand{\Ass}{\operatorname{Ass} }
\newcommand{\hilb}{\operatorname{HF} } 
\newcommand{\PP} {{\mathbb{P}}}
\newcommand{\NN} {{\mathbb{N}}}
\newcommand{\K} {{\mathbb{K}}}
\begin{document}
\sloppy

\title[rational normal curves]{regularity and symbolic defect of points on rational normal curves}
\thanks{}

\author{Iman Bahmani Jafarloo}
\address{DISMA, Dipartimento di Scienze Matematiche, Politecnico di Torino, Corso Duca degli Abruzzi 24, 10129 Turin, Italy. \newline \hspace*{0.3cm}  Dipartimento di Matematica, Universit\`a degli Studi di Torino, Via Carlo Alberto 10, 10123 Turin, Italy.}
\email{iman.bahmanijafarloo@polito.it}

\author{Grzegorz Malara}
\address{Pedagogical University of Cracow, Institute of
	Mathematics, Podchor\c{a}\.{z}ych 2, PL-30-084 Kraków, Poland}
\email{grzegorzmalara@gmail.com}

\keywords{}
\subjclass[2020]{}

\begin{abstract}
In this paper we study ideals of points lying on rational normal curves defined in projective plane and projective $ 3 $-space. We give an explicit formula for the value of Castelnuovo-Mumford regularity for their ordinary powers. Moreover, we compare the $ m $-th symbolic and ordinary powers for such ideals in order to show whenever the $ m $-th symbolic defect is non-zero.

\end{abstract}

\maketitle

\section{Introduction}
Studying Castelnuovo-Mumford regularity $\reg(I)$ of a homogeneous ideal $I \subseteq \K[x_0,\ldots,x_n]$ has a long story starting from the paper of Mumford \cite{Mum66}, who introduced the concept of $m$-regularity for an ideal $I$, i.e. the number $m$ for which all $i$-th syzygies of $I$ are generated in degrees not greater than $m+i$, for all $i$. Bayer and Stillman in \cite{BS}went on with Mumford's ideas by showing an explicit criterion for $m$-regularity. They also proved an equality between $\reg(I)$ and the regularity of initial ideal of $I$ with respect to the reverse lexicographic order in any characteristic of $\K$. A connection between Castelnuovo-Mumford regularity and syzygies of given ideal $I$ justifies why $\reg(I)$ can be viewed as a measure of complexity of $I$ and also explain unflagging interests in this subject.

Swanson in \cite{Swa} analyses $r$-th ordinary powers $I^r$ of homogeneous ideals $I$, showing that these powers can be expressed in terms of primary decomposition of $I^r$. As an additional result, it has been proved that $\reg(I^r)$ is bounded above by some linear functions which depend on $r$. As a consequence, a new way of investigation of $\reg(I^r)$ has begun. In \cite{CHT99} Cutkosky, Herzog, and Trung, building upon papers of Swanson and the paper of Bertram, Ein and Lazarsfeld \cite{BEL}, introduced a new asymptotic invariant, the so-called asymptotic regularity $\areg(I)$ of a homogeneous ideal $I$. Later, the work on regularity of homogeneous ideals and their powers was significantly improved in \cite{Char04,Eisenbud2004TheRO}, for instance for the case of Gorenstein and zero dimensional ideals.

One of the best known classes of curves in projective spaces $ \PP^n $ are rational normal curves and they have been studied widely, see \cite{carlini2007existence,CG1996,CEG99,conca2000}. Studying schemes of fat points lying on rational normal curves has its own long history. In \cite{CG1996} Catalisano and Gimigliano  gave an algorithm for computing the Hilbert function for fat point schemes lying on a twisted cubic curve and they extended the work for rational normal curves in $ \PP^n $ together with Ellia \cite{CEG99}. At the same time, Conca in \cite{conca2000} described the Hilbert function and resolution of symbolic and ordinary powers of ideals of rational normal curves.
 
Our motivation for this work is computing the regularity of powers of ideals of points on two types of rational normal curves, conic and twisted cubic curve. The main results of this paper concerning the regularity of powers of such ideals are Theorems \ref{th:regConic} and \ref{th:regTCC} which can be summed up as follows:
\begin{theoremX}
	Let $n\in \{2,3\}$ and let $ C\subset \PP^n $ be a rational normal curve. Denote by $I_{D_j}$ the ideal defining a set of $s$ general points on $ C $. Let $0\leq j < n$ be such that $s=nd-j$, then
	$$ \reg(I_{D_j}^r)=\begin{cases}
	rd+1  \quad \mbox{if}\   j=0\\
	rd  \ \ \qquad \mbox{if}\ 0<j<n. 
	\end{cases}
	$$
\end{theoremX}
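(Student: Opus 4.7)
The plan is to treat $n=2$ and $n=3$ separately, reducing in each case to an explicit resolution of $I_{D_j}$ combined with a Koszul/symmetric-power argument for the powers. The key geometric input is the parameterization $\PP^1\to C$, which identifies degree-$s$ divisors on $C$ with restrictions to $C$ of forms of $R$-degree $\lceil s/n\rceil$, where $R=\K[x_0,\ldots,x_n]$. Whenever possible I will recover $\reg(I_{D_j}^r)$ by reading off the top shifts of a minimal free resolution rather than by chasing local cohomology directly.

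\textbf{The conic case $n=2$.} When $j=0$ and $s=2d$, any $2d$ general points on $C$ arise as the intersection of $C$ with a plane curve of degree $d$ (and no plane curve of smaller degree cuts out these points on $C$, for $d\ge 2$), so $I_{D_0}=(f_2,f_d)$ is a complete intersection of type $(2,d)$. The well-known formula $\reg(I^r)=rb+a-1$ for a codimension-$2$ complete intersection of type $(a,b)$ with $a\le b$ then gives $\reg(I_{D_0}^r)=rd+1$. For $j=1$ so that $s=2d-1$, I would add a general point $p\in C$ to form $D_0=D_1\cup\{p\}$, which is a c.i.\ of type $(2,d)$. Comparing the minimal free resolutions of $I_{D_1}^r$ and $I_{D_0}^r$ via the mapping cone of the short exact sequence
\[
0\to I_{D_0}\to I_{D_1}\to I_{D_1}/I_{D_0}\to 0
\]
(together with its $r$-th power analogue) and the Hilbert-function computation on the quotient should yield $\reg(I_{D_1}^r)=rd$.

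\textbf{The twisted cubic case $n=3$.} Here $C$ is defined by the $2\times 2$ minors of a $2\times 3$ matrix of linear forms, with Eagon--Northcott resolution $0\to R(-3)^2\to R(-2)^3\to I_C\to 0$. Via $C\cong\PP^1$, a set $D$ of $s=3d-j$ general points on $C$ is cut out by $I_C$ together with forms of $R$-degree $d$ whose restrictions to $C$ are prescribed by the $s$-point divisor. I would first describe the minimal free resolution of $I_{D_j}$ using a Hilbert--Burch-type extension of the resolution of $I_C$, then pass to $I_{D_j}^r$ by constructing explicit generators of the form $F_1^{a_1}\cdots F_k^{a_k}$ (for the minimal generators $F_i$ of $I_{D_j}$) and combining Koszul syzygies with those inherited from $I_C$. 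The regularity is read off from the largest shift appearing in the resulting resolution.

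\textbf{Main obstacle.} The most delicate step is the twisted cubic case for $j>0$: the ideal $I_{D_j}$ is not a complete intersection, and the ordinary powers $I_{D_j}^r$ can differ from the symbolic powers $I_{D_j}^{(r)}$, so one must carefully control embedded primary components. Establishing the lower bound $\reg(I_{D_j}^r)\ge rd$ by exhibiting a non-vanishing graded component of $\operatorname{Tor}_i^R(R/I_{D_j}^r,\K)$ in the appropriate degree is typically the hardest part; the upper bound should follow more straightforwardly from the explicit resolution.
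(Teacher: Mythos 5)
Your outline identifies the right objects (explicit resolutions of $I_{D_j}$, the complete intersection structure for the conic with $j=0$), but as it stands it has genuine gaps at exactly the points where the theorem is nontrivial. The most serious one is the twisted cubic with $j=1$: there $\reg(I_{D_1})=d+1$, not $d$ (the minimal free resolution has top shift $-d-3$ in homological degree $3$), so any argument of the form $\reg(I_{D_1}^r)\le\reg(I_{D_1})+(r-1)d$ only yields $rd+1$, while the claimed answer is $rd$. Your sketch contains no mechanism for shaving off that extra $1$; the paper does it by observing that $I_{D_1}$ is Gorenstein (read off from the self-dual shape of its resolution) and invoking a sharper regularity bound for powers of Gorenstein zero-dimensional ideals due to Chandler. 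Without noticing that $j=1$ on the TCC behaves differently from $j=2$, the ``Hilbert--Burch-type extension plus Koszul syzygies'' plan cannot close this case. Relatedly, you flag the lower bound $\reg(I_{D_0}^r)\ge rd+1$ as ``the hardest part'' but give no argument: the paper proves it for the conic by writing the second syzygy matrix of $I_{D_0}^r$ explicitly (the complete intersection structure you mention does make this routine), but for the TCC it requires a separate device --- comparing $I_{D_0}^r$ with the larger ideal $J=I_C+\langle G^r\rangle$ via a short exact sequence, computing $\hilb(S/J,\cdot)$, and ruling out the bad Hilbert function by the Gruson--Maggioni--Roggero-type result that a tail $\ldots,1,1,0$ in the first difference forces $rd+2$ collinear points, impossible on a twisted cubic. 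Some argument of this kind is needed; generic position alone does not give it.

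Two smaller points. For $n=2$, $j=1$, your mapping-cone proposal via $0\to I_{D_0}\to I_{D_1}\to I_{D_1}/I_{D_0}\to 0$ does not pass to powers in any obvious way: $I_{D_1}^r/I_{D_0}^r$ is not easy to control and the mapping cone of the $r$-th powers is not induced by that sequence. The efficient route, used in the paper for every case with $j>0$, is the pair of general inequalities $rd=\beta(I_{D_j}^r)\le\reg(I_{D_j}^r)\le\reg(I_{D_j})+(r-1)d$ (the upper bound being the known statement for powers of zero-dimensional ideals generated in degrees at most $d$), which closes the case instantly once $\reg(I_{D_j})=d$ is verified from the resolution. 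Finally, your claim that $d$ general points impose the divisor --- i.e.\ that the points are cut by forms of degree $d$ and by nothing smaller --- is correct but should be justified; the paper does this by exhibiting explicit generators of $I_{D_j}$ of the form (linear form)$\cdot(x_0^{d-1}-x_n^{d-1})$ and checking via the parameterization that they cut out exactly $nd-j$ distinct points.
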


The paper is organized as follows. In Section \ref{sec:prelim} we recall all needed definitions and prove basic facts that are used through the paper. The first non-trivial case of a rational normal curve is a conic in $\PP^2$. We dedicate Section \ref{sec:ptsOnConic} to this case. It culminates with the proof of Theorem \ref{th:regConic}. Section \ref{sec:regTCC} is devoted to the study of twisted cubic curves and the culmination of this section is Theorem \ref{th:regTCC}. The last section is a small step towards understanding the structure of symbolic powers of ideals $I_{D_j}$. We prove that for all integers $m\geq 3$ there is $I_{D_j}^{(m)} \not\subseteq I_{D_j}^m$, and state a conjecture about the relation between symbolic and ordinary powers of ideals $ I_{D_j} $. 

\paragraph*{\emph{\textbf{Acknowledgement.}}} We want to warmly thank Enrico Carlini and Tomasz Szemberg for all inspiring discussions through the whole process of writing this article. Bahmani Jafarloo thanks Navid Nemati and Aldo Conca for useful conversations. 

Bahmani Jafarloo was partially supported by MIUR grant Dipartimento di Eccellenza 2018-2022 (E11G18000350001), Malara was partially supported by National Science Centre, Poland, Sonata Grant 2018/31/D/ST1/00177.

\section{Preliminary}
\label{sec:prelim}
Let $ S= K[x_0,\ldots,x_n] $ be the graded ring of polynomials over an algebraically closed filed $K$. Let
$$M=
\begin{pmatrix}
x_0 &x_1  & \cdots & x_{n-1}\\
x_1 &x_2 &\cdots &x_{n}
\end{pmatrix}.
$$
Denote by $ I=I_2(M) $ be the ideal generated by the $2$-minors of $ M $ (known as the Hankel matrix). It is known that the ideal $ I $ defines the rational normal curve (RNC for short) in $ \mathbb P^n $, which we denoted by $ C $, the Veronese embedding of 
\begin{equation}\label{verembed}
\nu_n:\mathbb{P}^1\hookrightarrow \mathbb{P}^n,\quad[s:t]\mapsto[s^n:s^{n-1}t:\cdots: st^{n-1}:t^{n} ].
\end{equation}

Recall that for any homogeneous ideal $J$ the Hilbert function $ \hilb(S/J,t) $ of $ S/J $, for $ t\in\mathbb{N}\cup\{0\} $, is the dimension over $K$ of degree $t$ homogeneous part of $S/J$. 
\begin{remark}
For the ideal $I=I_2(M)$ the Hilbert function of $S/I$ is known to be  $$\hilb(S/I,t)=n(t+1)-(n-1), \quad \text{for}\ t \geq 0.$$
\end{remark}
Let $ J\subset S$ be any homogeneous ideal. We denote by $ \beta_{ij}(J) $ the $ (i,j) $-th \emph{Betti number} of $ J $, i.e. the dimension of $ \operatorname{Tor}_i^S(J,K) $ in degree $ j $. By definition, the Castelnuovo-Mumford regularity $\reg(J)$ of $J$ is $$\operatorname{reg}(J)=\max \left\{j-i: \beta_{i j}(J) \neq 0\right\}.$$
It is convenient to write $ \beta(J) $ and $ \alpha(J) $ for the maximum and the minimum degree of the minimal set of generators of $ J $, respectively. In general we have that  $ \reg(J)\geq \beta(J) $ and $ \reg(S/J)=\reg(J)-1 $.
\begin{remark}
 It is known that for a zero-dimensional ideal $ J $, if $ t\geq 0 $ is the least value such that $\Delta\hilb(S/J,t)=\hilb(S/J,t)-\hilb(S/J,t-1)=0$, then $ \reg(J)=t $. 
\end{remark}
\begin{definition}
	\label{def:aReg}
	Let $J\subset S$ be a homogeneous ideal. Then the asymptotic regularity of $J$ is the real number
	$$\areg(J) =\lim_{r \rightarrow \infty} \frac{\reg(J^r)}{r}.$$
\end{definition}
At it was shown in \cite[Theorem 1.1]{CHT99}, we have always that $\areg(J)= \frac{\beta(J^r)}{r}$, since it is known that $\beta(J^r)$ is linear function which depends on $r$ for all $r\gg 0$.

Let $ D_j\subset C $ be a set of $ nd-j $ general points on the rational normal curve $ C\subset \PP^n $ for integers $ d\geq 2 $ and $0\leq j\leq n-1 $. Denote by $ I_{D_j}  $ the ideal defining the set $ D_j $. In the following we study the ideal $ I_{D_j}  $ and the next lemma is an observation that we need in order to prove that the forms of order $rd$ does not vanish in $I_{D_j}^r$.
 
 \begin{lemma}\label{maxdegIDr}
	Let $ D_j $ be a set of $ nd-j $ points on rational normal curve $ C $. Then, $\beta(I_{D_j}^r)=r\beta(I_{D_j})=rd$.
\end{lemma}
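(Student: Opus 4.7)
The statement packages two facts---that $\beta(I_{D_j}) = d$ and that $\beta(I_{D_j}^r) = r\beta(I_{D_j})$---which I would prove in that order. The first is a Veronese/Hilbert-function computation; the second splits into a trivial upper bound and a lower bound achieved by exhibiting an explicit minimal generator of $I_{D_j}^r$ in degree $rd$. Throughout I write $\mathfrak{m}$ for the maximal irrelevant ideal $(x_0,\dots,x_n)$.

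For $\beta(I_{D_j}) = d$: via the Veronese $\nu_n$, a degree-$t$ form on $\mathbb{P}^n$ restricts on $C\cong\mathbb{P}^1$ to a degree-$nt$ form on $\mathbb{P}^1$, and those vanishing on $D_j$ correspond to multiples of the degree-$(nd-j)$ polynomial $f_0\in R := K[s,t]$ cutting out $\nu_n^{-1}(D_j)$. Hence $(I_{D_j}/I_C)_t \simeq f_0 \cdot R_{nt-(nd-j)}$, which vanishes for $t<d$ and has dimension $j+1$ at $t=d$. For $t>d$, the surjectivity $R_n \cdot R_{n(t-1)-(nd-j)} = R_{nt-(nd-j)}$, combined with the fact that $I_C$ is generated in degree $2$ by the $2$-minors of $M$, yields $S_1 \cdot (I_{D_j})_{t-1} = (I_{D_j})_t$ for all $t>d$. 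Hence $I_{D_j}$ has no minimal generators in degree greater than $d$, while the $(j+1)$-dimensional ``new'' part in degree $d$ supplies minimal generators at exactly that degree.

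For $\beta(I_{D_j}^r) = rd$: the upper bound $\beta(I_{D_j}^r) \le rd$ is immediate since $I_{D_j}^r$ is generated by $r$-fold products of minimal generators of $I_{D_j}$, each of degree at most $d$. For the matching lower bound the case $d=2$ is trivial because $I_{D_j}$ is then generated in degree $2$ only, so $(I_{D_j}^r)_{<2r}=0$ and every nonzero element of $(I_{D_j}^r)_{2r}$ is minimal. Assume $d\ge 3$. The key observation is a degree count: any product of $r$ minimal generators of $I_{D_j}$ with $a\ge 1$ factors of degree $2$ and $r-a$ of degree $d$ has total degree $rd - a(d-2) \le rd-1$. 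Hence every element of $(I_{D_j}^r)_{rd-1}$ is a combination of products containing at least one degree-$2$ Hankel minor, so $(I_{D_j}^r)_{rd-1}\subset I_C$, and therefore $(\mathfrak{m}\,I_{D_j}^r)_{rd} = S_1\cdot(I_{D_j}^r)_{rd-1} \subset I_C$. Now pick $F\in (I_{D_j})_d\setminus(I_C)_d$; this exists because $(I_{D_j}/I_C)_d$ has dimension $j+1\ge 1$. Since $C$ is irreducible, $I_C$ is a prime ideal, so $F\notin I_C$ forces $F^r\notin I_C$. In particular $F^r\in (I_{D_j}^r)_{rd}$ does not lie in $\mathfrak{m}\,I_{D_j}^r$, and is therefore a minimal generator of $I_{D_j}^r$ of degree $rd$, proving $\beta(I_{D_j}^r)\ge rd$.

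The main conceptual ingredient is the primality of $I_C$: combined with the degree-counting inclusion $(\mathfrak{m}I_{D_j}^r)_{rd}\subset I_C$, it immediately gives $F^r\notin \mathfrak{m} I_{D_j}^r$. I expect this to be the only non-routine step; everything else is Hilbert-function bookkeeping or classical structural facts about rational normal curves.
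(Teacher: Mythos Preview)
Your argument is correct. The degree count showing $(I_{D_j}^r)_{rd-1}\subset I_C$ for $d\ge 3$, combined with the primality of $I_C$ to certify $F^r\notin \mathfrak{m}\,I_{D_j}^r$, is a clean and fully self-contained way to produce a minimal generator in the top degree; the Hilbert-function computation for $\beta(I_{D_j})=d$ via the Veronese identification is likewise sound.

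This is, however, a genuinely different route from the paper. The paper dispatches the lemma in one line by invoking \cite[Exercise~A2.21(d)]{Eis95}: it passes to the coordinate ring $S/I$ of $C$, regards $I_{D_j}$ there as an ideal generated in degree at most $d$, and reads off $\beta(I_{D_j}^r)=r\beta(I_{D_j})$ from the cited structural fact. That approach is terse and leaves the reader to unpack the exercise and the passage between $S$ and $S/I$. Your approach, by contrast, stays entirely in $S$, uses nothing beyond the primality of $I_C$ and an explicit degree-$d$ element $F\in I_{D_j}\setminus I_C$, and exhibits the minimal generator $F^r$ directly. What you lose in brevity you gain in transparency: the mechanism (everything in $(\mathfrak{m}I_{D_j}^r)_{rd}$ is forced through $I_C$, whereas $F^r$ is not) is made completely explicit, and no external reference is needed.
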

\begin{proof}
	The proof directly follows from \cite[Exercise A2.21, d]{Eis95}. More precisely, $ I_{D_j} $ is an ideal in the symmetric algebra $ S/I $ (the coordinate ring of $ C $) generated at most in degree $ d $. 
\end{proof}
\begin{proposition}\label{boundforreg}
	Let $ D_j $ be as in Lemma \ref{maxdegIDr}. If $ r\geq2  $ and $ d\geq2  $, then $$ rd\leq\reg(I_{D_j}^r)\leq \reg(I_{D_j})+(r-1)d.$$
\end{proposition}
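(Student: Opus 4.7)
The plan is to treat the two inequalities separately; the lower bound is immediate from what has already been established, and the upper bound will be obtained by an inductive argument based on a standard short exact sequence.

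\textbf{Lower bound.} For any homogeneous ideal $J$ one has $\reg(J)\ge\beta(J)$ (this is recorded in Section~\ref{sec:prelim}). Applying it to $J=I_{D_j}^r$ and using Lemma~\ref{maxdegIDr}, which gives $\beta(I_{D_j}^r)=rd$, I immediately get $\reg(I_{D_j}^r)\ge rd$.

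\textbf{Upper bound.} I would argue by induction on $r\ge 1$, the case $r=1$ being a tautology. For the inductive step the goal is the recursive estimate
\[
\reg(I_{D_j}^{r}) \le \reg(I_{D_j}^{r-1}) + d,
\]
since iterating this from $r=1$ produces the claimed bound $\reg(I_{D_j}^{r})\le \reg(I_{D_j})+(r-1)d$. To obtain it, I would consider the short exact sequence of graded $S$-modules
\[
0 \longrightarrow I_{D_j}^{r} \longrightarrow I_{D_j}^{r-1} \longrightarrow I_{D_j}^{r-1}/I_{D_j}^{r} \longrightarrow 0,
\]
which, via the standard regularity inequality for a short exact sequence, yields
\[
\reg(I_{D_j}^{r}) \le \max\bigl\{\reg(I_{D_j}^{r-1}),\ \reg(I_{D_j}^{r-1}/I_{D_j}^{r})+1\bigr\}.
\]
The first argument of the maximum is already compatible with the recursive bound. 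It therefore remains to show that
\[
\reg(I_{D_j}^{r-1}/I_{D_j}^{r}) \le \reg(I_{D_j}^{r-1}) + d - 1.
\]

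\textbf{Main obstacle.} The crux of the proof is this regularity estimate for the quotient module $N_r:=I_{D_j}^{r-1}/I_{D_j}^{r}$. Observe that $N_r$ is annihilated by $I_{D_j}$, hence is naturally a finitely generated graded module over the one-dimensional Cohen--Macaulay ring $A=S/I_{D_j}$, the homogeneous coordinate ring of the zero-dimensional scheme $D_j$. Moreover, by Lemma~\ref{maxdegIDr}, a minimal generating set of $N_r$ lives in degrees at most $(r-1)d$. The hard part is to convert these two pieces of information into the sharp regularity bound above; I expect this to require exploiting the explicit Veronese description of $I_{D_j}$ in~\eqref{verembed}—which presents $I_{D_j}$ modulo the rational normal curve ideal $I$ as an ideal generated in a single $R$-degree—together with the structural information on resolutions of powers of rational normal curve ideals from~\cite{conca2000}.
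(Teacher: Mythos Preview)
Your lower bound is exactly the paper's argument: combine $\reg(J)\ge\beta(J)$ with Lemma~\ref{maxdegIDr}.

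For the upper bound, however, your proposal is incomplete rather than wrong, and your diagnosis of where the difficulty lies is off target. You correctly set up the short exact sequence and reduce the question to bounding $\reg\bigl(I_{D_j}^{r-1}/I_{D_j}^{r}\bigr)$, but then you stop and speculate that finishing requires the Veronese description of $I_{D_j}$ and the structural results of~\cite{conca2000}. In fact no special feature of rational normal curves is needed here at all: the bound $\reg(I^{r})\le \reg(I)+(r-1)d$ holds for \emph{any} homogeneous ideal $I$ defining a zero-dimensional projective scheme and generated in degrees at most $d$. This is exactly how the paper proceeds: it simply invokes \cite[Corollary~7.9]{Eisenbud2004TheRO}, noting that $I_{D_j}$ is zero-dimensional and generated in degree $\le d$.

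So the gap is that you have reproduced the first step of the (known) proof of that corollary but not the second, and you have misidentified what is needed to close it. The missing ingredient is a general fact about modules of dimension $\le 1$ (equivalently, modules over the one-dimensional Cohen--Macaulay ring $S/I_{D_j}$): for such a module the regularity is controlled by the degrees of its generators together with the regularity of the ring, which yields the required estimate on $\reg\bigl(I_{D_j}^{r-1}/I_{D_j}^{r}\bigr)$ without any appeal to the RNC structure. Either cite the result directly, as the paper does, or supply this general module-theoretic step; the detour through~\cite{conca2000} is unnecessary.
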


\begin{proof}
	On the one hand from Lemma \ref{maxdegIDr} and the fact that $ \beta(I_{D_j}^r)\leq \reg(I_{D_j}^r) $, we have that $ rd\leq\reg(I_{D_j}^r) $. On the other hand since $I_{D_j}$ is a zero-dimension ideal generated at most in degree $ d $, therefore from \cite[Corollary 7.9]{Eisenbud2004TheRO} we have that
	$ \reg(I_{D_j}^r)\leq \reg(I_{D_j})+(r-1)d$. Hence, $$ rd\leq\reg(I_{D_j}^r)\leq \reg(I_{D_j})+(r-1)d.\qedhere$$
\end{proof}

\begin{lemma}\label{distinctpoints}
	The set $\{x_0^{d-1}-x_n^{d-1}=0\} $ and $ C $ meet each other exactly at $n(d-1)  $ distinct points.
\end{lemma}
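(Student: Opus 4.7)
The plan is to pull the intersection back to $\mathbb{P}^1$ via the parametrization \eqref{verembed} of $C$ and count roots there. Under $\nu_n$ we have $x_0=s^n$ and $x_n=t^n$, so the equation $x_0^{d-1}-x_n^{d-1}=0$ becomes
$$s^{n(d-1)}-t^{n(d-1)}=0$$
in the homogeneous coordinates $[s:t]$ of $\mathbb{P}^1$. Since $\nu_n$ is an isomorphism onto $C$, counting intersection points of $\{x_0^{d-1}-x_n^{d-1}=0\}$ with $C$ is the same as counting solutions of this degree-$n(d-1)$ binary form.

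Next I would check that this binary form has only simple roots. The point $[1:0]$ is not a solution (it would force $s=0$), and at the affine chart $t\neq0$, setting $u=s/t$ reduces the equation to $u^{n(d-1)}=1$. Over the algebraically closed field $K$, this yields exactly $n(d-1)$ distinct $n(d-1)$-th roots of unity, so the form factors as a product of $n(d-1)$ pairwise non-proportional linear factors in $s,t$. Pushing these $n(d-1)$ distinct points of $\mathbb{P}^1$ forward through the injective map $\nu_n$ produces $n(d-1)$ distinct points on $C$, which by construction are precisely the intersection points.

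No step here looks genuinely obstructive; the one mild technical point is the presence of $n(d-1)$ distinct roots of unity, which relies on $K$ being algebraically closed of characteristic coprime to $n(d-1)$ (in characteristic zero, as is standard in this context, there is nothing to worry about). Aside from that, the proof is a direct substitution-and-count argument.
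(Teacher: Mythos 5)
Your proof is correct and takes essentially the same route as the paper's: pull the equation back to $\mathbb{P}^1$ via the parametrization \eqref{verembed} and count distinct roots of unity. The only cosmetic difference is that the paper first factors $x_0^{d-1}-x_n^{d-1}$ into the $d-1$ hyperplanes $x_0-\xi_i x_n$ and then substitutes (getting $n$ points per hyperplane), whereas you substitute first and solve $u^{n(d-1)}=1$ in one step; your remark that the characteristic must be coprime to $n(d-1)$ for the roots to be distinct is a hypothesis the paper leaves implicit.
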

\begin{proof}
One can see that
	$$ x_0^{d-1}-x_n^{d-1} =\prod_{i=1}^{d-1}(x_0-\xi_ix_n),$$
	where $\xi_i$ is the $i$-th primitive root of unity for $ i=1,\ldots,d-1 $. By (\ref{verembed}) we have that
	\begin{flalign*}
	\prod_{i=1}^{d-1}(x_0-\xi_ix_n)= \prod_{i=1}^{d-1} (s^n-\xi_it^n)= \prod_{i=1}^{d-1}(\zeta^n-\xi_i). 
	\end{flalign*}
	It follows that $\{x_0^{d-1}-x_n^{d-1}=0\} $ intersects $ C $ at $ n(d-1) $ distinct points, therefore the desired result follows. Moreover, we conclude that no two hyperplanes $\{x_0-\xi_\alpha x_n=0\}$ and $\{x_0-\xi_\beta x_n=0\}$, with $\alpha\neq \beta$, intersect $C$ at the same point for all $\alpha,\beta\in\{1,2,\ldots,d-1\} $. 
\end{proof}

In the following sections, we study the regularity of $ I_{D_j}^r $ where $ D_j $ lies on a conic in $ \PP^2 $, or on a twisted cubic curve (TCC) in $ \PP^3 $. Since we are considering these points in two separate sections, we agree to use the same notation of $C$ for both, conic and TCC.
\section{Regularity of points on a conic}
\label{sec:ptsOnConic}
This section is devoted to study the regularity of $ I^r_{D_j} $ where $ D_j\subset C \subset\PP^2$.
By the definition of ideal $I$, we have that
$$I=\det
\begin{pmatrix}
x_0 &x_1 \\
x_1 &x_2
\end{pmatrix} =\left\langle x_1^2-x_0x_2\right\rangle= \left\langle Q\right\rangle.
$$
\begin{lemma}\label{distpointsn2}
Let $ D_j $ be a set of $ 2d-j $ distinct points in $ \PP^2 $ lie on $ C $ for $ d\geq 2 $ and $ j\in\{0,1\} $. Then its defining ideal can be represented as:
$$I_{D_j}=\begin{cases}
I+\left\langle x_1(x_0^{d-1}-x_2^{d-1})\right\rangle,   \qquad\qquad\qquad \qquad\qquad \mbox{if}\   j=0\\
I+\left\langle x_1(x_0^{d-1}-x_2^{d-1}),x_0(x_0^{d-1}-x_2^{d-1})\right\rangle,   \ \ \qquad \mbox{if}\ j=1.
\end{cases}
$$
\end{lemma}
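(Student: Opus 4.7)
My plan is to exhibit a specific set $D_j$ of $2d - j$ distinct points on $C$ for which the right-hand side is manifestly a subideal of $I_{D_j}$, and then to establish equality via a Hilbert function comparison. For $j = 0$, I would take $D_0$ to consist of the two points $[1:0:0]$ and $[0:0:1]$, which form $C \cap \{x_1 = 0\}$, together with the $2d - 2$ points of $C \cap \{x_0^{d-1} = x_2^{d-1}\}$ supplied by Lemma \ref{distinctpoints}; for $j = 1$, I would take $D_1 = D_0 \setminus \{[1:0:0]\}$.

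Writing $J_j$ for the right-hand ideal, the first step is to verify $J_j \subseteq I_{D_j}$ by pulling the generators back under $\nu_2([s:t]) = [s^2 : st : t^2]$: the quadric $x_1^2 - x_0 x_2 \in I$ vanishes on all of $C$; the generator $x_1(x_0^{d-1} - x_2^{d-1})$ becomes $st(s^{2d-2} - t^{2d-2})$, which vanishes at exactly the $2d$ points of $D_0$; and $x_0(x_0^{d-1} - x_2^{d-1})$ becomes $s^2(s^{2d-2} - t^{2d-2})$, which vanishes at the $2d - 1$ points of $D_1$.

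For equality, the plan is to compute both Hilbert functions and show they agree. Using $(S/I)_t \cong K[s,t]_{2t}$ induced by $\nu_2$, an element of $I_{D_j} \cap S_t$ pulls back to a binary form of degree $2t$ vanishing on our $2d - j$ points of $\PP^1$; this forces the pullback to be zero for $t \le d - 1$ (too few degrees for too many conditions), and for $t \ge d$ the evaluation map at the $2d - j$ points is surjective, giving $\hilb(S/I_{D_j}, t) = 2d - j$. For $J_0$, the form $x_1(x_0^{d-1} - x_2^{d-1})$ is a non-zerodivisor in the domain $S/I$, so the short exact sequence $0 \to (S/I)(-d) \to S/I \to S/J_0 \to 0$ yields the same Hilbert function and equality $J_0 = I_{D_0}$ follows.

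The main obstacle I foresee is the analogous Hilbert function computation for $J_1$: the two extra generators share the factor $h := x_0^{d-1} - x_2^{d-1}$ and satisfy the nontrivial syzygy $x_0 \cdot (x_1 h) = x_1 \cdot (x_0 h)$ in $S/I$, so determining the image of the multiplication map $(S/I)_{t-d}^{\oplus 2} \to (S/I)_t$ sending $(A, B) \mapsto A \cdot x_1 h + B \cdot x_0 h$ requires an explicit count in each degree $t \ge d$. The target is $\hilb(S/J_1, t) = 2d - 1$ for $t \ge d$, after which the inclusion $J_1 \subseteq I_{D_1}$ combined with equal Hilbert functions forces $J_1 = I_{D_1}$.
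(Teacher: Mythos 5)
Your strategy is sound but genuinely different from the paper's. The paper's own proof is purely geometric: it invokes Lemma \ref{distinctpoints} to check that the added hypersurface meets $C$ transversally in exactly $2d-j$ distinct points (the line $\{x_1=0\}$ is not tangent to $C$ and shares no intersection point with the factors $x_0-\xi_\alpha x_2$), and takes this point count as identifying the displayed ideal with $I_{D_j}$; whether that ideal is the full saturated ideal of those points is left implicit. You instead fix an explicit $D_j$, verify the containment $J_j\subseteq I_{D_j}$ by pulling back along $\nu_2$, and force equality by matching Hilbert functions in every degree. This costs an extra computation but buys a complete ideal-theoretic identification (inclusion plus equal Hilbert functions gives equality degree by degree); your $j=0$ case, via the non-zerodivisor $x_1(x_0^{d-1}-x_2^{d-1})$ in the domain $S/I$, is already complete.

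The one outstanding item is the count you flag for $j=1$, and it does close. Write $h=x_0^{d-1}-x_2^{d-1}$ and let $g=s^{2d-2}-t^{2d-2}$ be its pullback, so that $x_1h$ and $x_0h$ pull back to $stg$ and $s^2g$. Identifying $(S/I)_e$ with $K[s,t]_{2e}$, the image of your map $(A,B)\mapsto A\cdot x_1h+B\cdot x_0h$ in degree $e\geq d$ is
$$sg\cdot\bigl(t\,K[s,t]_{2e-2d}+s\,K[s,t]_{2e-2d}\bigr)=sg\cdot K[s,t]_{2e-2d+1},$$
which has dimension $2e-2d+2$; the syzygy $x_0(x_1h)=x_1(x_0h)$ you point out is precisely why the image collapses to this single principal piece rather than a direct sum. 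Hence $\hilb(S/J_1,e)=(2e+1)-(2e-2d+2)=2d-1$ for $e\geq d$, and $2e+1$ for $e<d$, matching $\hilb(S/I_{D_1},e)$ and finishing your argument. With that paragraph added, your proof is correct and, if anything, more airtight than the paper's.
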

\begin{proof}
		We proceed as follows:
\begin{itemize} 
	\item Let $ j=0 $. By Lemma \ref{distinctpoints} one can see that $\{x_1=0\} \cap \{x_0 - \xi_{\alpha}x_2=0\} \cap C=\emptyset$ for $\alpha={1,2,\ldots,d-1} $. Since the line $ \{x_1=0\} $ does not contain any tangent line to $ C $, therefore the intersection of $ \{x_1(x_0^{d-1}-x_2^{d-1})=0\} $ and $ C $ is a set $ 2(d-1)+2=2d $ distinct points.
	\item Let $ j=1 $. Since the point $\{ \langle x_1,x_0\rangle\}\not\in \{x_0^{d-1}-x_2^{d-1}=0\} $, the desired result follows from Lemma \ref{distinctpoints}.
\end{itemize}

\end{proof}

 \begin{proposition}\label{propreg2}
	Let $ D_j $ be as in Lemma \ref{distpointsn2}. Then,
	$$ \reg(I_{D_j})=\begin{cases}
	d+1,  \quad \mbox{if}\   j=0,\\
	d,  \ \ \qquad \mbox{if}\ j=1. 
	\end{cases}
	$$
\end{proposition}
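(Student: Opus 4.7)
The plan is to compute the Hilbert function $\hilb(S/I_{D_j},t)$ directly and then invoke the criterion, recalled in the preliminaries, that for a zero-dimensional ideal $J$ the regularity $\reg(J)$ equals the least $t\geq 0$ with $\Delta\hilb(S/J,t)=0$. Since $I\subset I_{D_j}$, I pass to the coordinate ring $R=S/I$ of the conic, which is a one-dimensional Cohen--Macaulay domain with $\hilb(R,t)=2t+1$. Set $f:=x_0^{d-1}-x_2^{d-1}\in R$; it has degree $d-1$, is nonzero, and hence (since $R$ is a domain) is a nonzerodivisor.

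For $j=0$, Lemma \ref{distpointsn2} identifies the image of $I_{D_0}$ in $R$ with the principal ideal $(x_1f)$, which has degree $d$. Since $x_1f$ is a nonzerodivisor on $R$, the short exact sequence
\[
0\longrightarrow R(-d)\xrightarrow{\;\cdot\,x_1f\;}R\longrightarrow R/(x_1f)\longrightarrow 0
\]
yields $\hilb(S/I_{D_0},t)=\min(2t+1,\,2d)$. The first difference $\Delta\hilb$ reads $1,2,2,\dots,2,1,0,0,\dots$ and first vanishes at $t=d+1$, so $\reg(I_{D_0})=d+1$.

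For $j=1$, Lemma \ref{distpointsn2} gives $\bar I_{D_1}=fJ$ in $R$, where $J=(x_0,x_1)R$. The scheme $V(J)\cap C$ is the reduced point $[0:0:1]$, so $R/J\cong K[x_2]$, giving $\hilb(R/J,t)=1$ and consequently $\hilb(J,t)=2t$ for $t\geq 1$. Multiplication by $f$ produces a graded isomorphism $J(-(d-1))\cong fJ$, and combining the short exact sequence $0\to J/fJ\to R/fJ\to R/J\to 0$ with this isomorphism yields
\[
\hilb(S/I_{D_1},t)=1+\hilb(J,t)-\hilb(J,t-d+1)=\min(2t+1,\,2d-1),
\]
whose $\Delta$ first vanishes at $t=d$, so $\reg(I_{D_1})=d$. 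The main subtlety is careful bookkeeping of degrees (in particular $f$ itself has degree $d-1$, not $d$, while the added generators have degree $d$), together with the justification that the two short exact sequences actually deliver the stated Hilbert functions; both reductions rest on $R$ being a domain, so that $f$ is a nonzerodivisor both on $R$ (for $j=0$) and on the ideal $J$ (for $j=1$).
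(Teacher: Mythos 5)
Your proof is correct, but it takes a genuinely different route from the paper. The paper proves Proposition \ref{propreg2} by exhibiting the explicit minimal free resolutions of $S/I_{D_0}$ (a complete intersection of type $(2,d)$, Koszul resolution ending in $S(-d-2)$) and of $S/I_{D_1}$ (Hilbert--Burch type, ending in $S^2(-d-1)$), and reading the regularity off the twists. You instead work in the coordinate ring $R=S/I$ of the conic, compute $\hilb(S/I_{D_j},t)$ via the exact sequences $0\to R(-d)\to R\to R/(x_1f)\to 0$ and $0\to J/fJ\to R/fJ\to R/J\to 0$, and invoke the $\Delta\hilb$ criterion for zero-dimensional ideals recalled in the paper's preliminaries; your Hilbert-function values $\min(2t+1,2d)$ and $\min(2t+1,2d-1)$ and the resulting regularities check out. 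Your route has the advantage of not requiring one to verify exactness and minimality of the displayed syzygy matrices (which the paper essentially delegates to Macaulay2 in the analogous Proposition \ref{propreg}), everything resting only on $R$ being a domain; the paper's route yields more, namely the full graded Betti numbers, which are reused later (e.g.\ the shape of $A_1,A_2$ reappears in Lemma \ref{uperd0n2}). One cosmetic slip: $R$ is the affine cone over the conic, hence a two-dimensional graded domain (the projective curve is one-dimensional); this does not affect your argument, since you only use that $R$ is a domain with $\hilb(R,t)=2t+1$.
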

\begin{proof}
Let $ j=0 $. Then the syzygy matrices of $ S/I_{D_0} $ are as follows
$$A_1=\begin{pmatrix}
Q & x_1(x_0^{d-1}-x_2^{d-1})
\end{pmatrix},\ A_2=\begin{pmatrix}
 x_1(x_0^{d-1}-x_2^{d-1})\\
 	-Q
\end{pmatrix}.$$
Therefore, we have its minimal free resolution 
$$0\xrightarrow{}S(-d-2)\xrightarrow{A_2}S(-2)\oplus S(-d)\xrightarrow{A_1}S\xrightarrow{}S / I_{D_0}\xrightarrow{}0,$$
and from that $ \reg( S/I_{D_0})=d $. Accordingly, $ \reg(I_{D_0})=d+1 $ 

Similarly, for $ j=1 $, we compute the syzygy matrices for $ S/I_{D_1} $,
$$A_1=\begin{pmatrix}
Q & x_1(x_0^{d-1}-x_2^{d-1}) & x_0(x_0^{d-1}-x_2^{d-1})
\end{pmatrix},\ A_2=\begin{pmatrix}
0 & x_0^{d-1}-x_2^{d-1}\\
x_0 & -x_1\\
-x_1 &x_2
\end{pmatrix}.$$
Hence,
$$0\xrightarrow{}S^2(-d-1)\xrightarrow{A_2}S(-2)\oplus S^2(-d)\xrightarrow{A_1}S\xrightarrow{}S / I_{D_1}\xrightarrow{}0.$$
We see that $ \reg( S/I_{D_1} ) = d-1 $, consequently $ \reg(I_{D_1} ) = d $.
\end{proof}
\begin{lemma}\label{uperd0n2}
Let $ D_0 $ be as in Lemma \ref{distpointsn2}. Then, $ \reg(I_{D_0}^r)\geq rd+1 $ for $r\geq 2  $.
\end{lemma}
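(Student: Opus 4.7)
The strategy is to exploit the complete-intersection structure of $I_{D_0}$ and then read the regularity off an explicit minimal free resolution of $I_{D_0}^r$.

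First I would verify that $I_{D_0}$ is a complete intersection. By Lemma \ref{distpointsn2}, $I_{D_0}=(Q,F)$ with $Q=x_1^2-x_0x_2$ and $F=x_1(x_0^{d-1}-x_2^{d-1})$. The quadric $Q$ is irreducible, and by (the proof of) Lemma \ref{distinctpoints} none of the linear factors $x_1,\ x_0-\xi_\alpha x_2$ of $F$ divides $Q$; hence $Q,F$ is a regular sequence and $I_{D_0}$ is a codimension-$2$ complete intersection.

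Next, $I_{D_0}^r$ is minimally generated by $g_i:=Q^{r-i}F^i$ for $0\le i\le r$. Indeed, $g_r=F^r$ is not an $S$-linear combination of $g_0,\ldots,g_{r-1}$ since each of those is divisible by $Q$ while $Q\nmid F^r$; a symmetric argument (using $F\nmid Q^r$) handles $g_0$, and degree and divisibility considerations dispose of the intermediate $g_i$. Because the powers of a codimension-$2$ complete intersection ideal in a polynomial ring remain Cohen-Macaulay of codimension $2$ (the associated graded ring of $I_{D_0}$ is a polynomial ring in two variables over the Cohen-Macaulay quotient $S/I_{D_0}$, so each $I_{D_0}^k/I_{D_0}^{k+1}$ is Cohen-Macaulay of the right dimension, whence a filtration argument gives $\mathrm{depth}(S/I_{D_0}^r)=\dim(S/I_{D_0})=1$), the Hilbert-Burch theorem applies to $I_{D_0}^r$ and yields the minimal free resolution
\[
0\longrightarrow \bigoplus_{i=0}^{r-1}S\bigl(-d(r-i)-2(i+1)\bigr)\xrightarrow{\ \varphi\ }\bigoplus_{i=0}^{r}S\bigl(-d(r-i)-2i\bigr)\longrightarrow I_{D_0}^r\longrightarrow 0,
\]
in which $\varphi$ is the bidiagonal matrix encoding the Koszul-type syzygies $F\cdot g_i - Q\cdot g_{i+1}=0$ for $0\le i\le r-1$.

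From this resolution, the syzygy indexed by $i=r-1$ (relating $g_{r-1}=QF^{r-1}$ of degree $(r-1)d+2$ to $g_r=F^r$ of degree $rd$) gives $\beta_{1,\,rd+2}(I_{D_0}^r)\ne 0$, and therefore $\reg(I_{D_0}^r)\ge (rd+2)-1=rd+1$, as claimed.

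The main obstacle is justifying that the displayed sequence really is the minimal free resolution, i.e.\ that the Koszul-type syzygies generate the entire first syzygy module. This follows from the perfectness statement combined with a rank count ($r+1$ generators and projective dimension $1$, forcing exactly $r$ minimal syzygies). A reader who prefers to sidestep Hilbert-Burch can argue instead that $I_{D_0}$ is of linear type, so $I_{D_0}^r$ is isomorphic to the $r$-th symmetric power of $I_{D_0}$, and then obtain the resolution above directly by taking the $r$-th symmetric power of the Koszul complex of $Q,F$.
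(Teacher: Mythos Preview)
Your argument is correct and follows essentially the same route as the paper: both identify the minimal generators $Q^{r-i}F^i$ of $I_{D_0}^r$, write down the bidiagonal Koszul syzygies among them, and extract the syzygy of degree $rd+2$ to conclude $\beta_{1,rd+2}(I_{D_0}^r)\neq 0$. The paper simply asserts the form of the second syzygy matrix as a ``straightforward computation,'' whereas you supply a structural reason (the ideal is a complete intersection of two forms, so $S/I_{D_0}^r$ is Cohen--Macaulay of codimension~$2$ and Hilbert--Burch forces the Koszul relations to be \emph{all} the first syzygies); this is a welcome addition of rigor but not a genuinely different method.

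One small cosmetic point: in your displayed resolution the summand $S(-d(r-i)-2i)$ has degree $dr$ at $i=0$ and $2r$ at $i=r$, so it matches $g_{r-i}$ rather than $g_i$ in your labeling; consequently the syzygy of degree $rd+2$ sits at index $i=0$ of your direct sum, not $i=r-1$. Your verbal description (``relating $g_{r-1}=QF^{r-1}$ to $g_r=F^r$'') is correct, so this is only an indexing slip and does not affect the proof.
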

\begin{proof}
Set $ G= x_1(x_0^{d-1}-x_2^{d-1}) $. Directly from the definition of ordinary power $  I_{D_0}^r =\left\langle \left\lbrace Q^{r-t}G^t\right\rbrace_{t=0}^r  \right\rangle $. Hence, the first syzygy matrix of  $ S/ I_{D_0}^r $ is
$$A_1=\begin{bmatrix}
Q^r & Q^{r-1}G & Q^{r-2}G^2 & \cdots  & Q^2G^{r-2} & QG^{r-1} & G^{r}
\end{bmatrix}.$$
It is a straightforward computation that the second syzygy matrix can be express in the following manner
$$
A_2=\underbrace{\begin{bmatrix}
-G & 0 &0 & 0 & \cdots  \\
Q & -G &0 & 0 & \cdots  \\
0 & Q &-G & 0  & \cdots \\
0 & 0 &Q & -G & \cdots  \\
0 & 0 &0 & Q  & \cdots \\
0 & 0 &0 & 0 & \cdots \\
\vdots  & \vdots  &\vdots & \vdots & \cdots \\
0 & 0 &0 & 0 & \cdots \\
0 & 0 &0 & 0 & \cdots \\
 0 & 0 & 0 & 0 & \cdots \\
	\end{bmatrix}}^{A_{21}}_{S(-(2r+1))\cdots}
\cdots
\underbrace{\begin{bmatrix}
 0\\
0\\
0\\
0\\
0 \\
0\\
\vdots\\
-G\\
Q\\
0\\
	\end{bmatrix}}^{A_{22}}_{S(-(rd+1))}
\cdots
\underbrace{\begin{bmatrix}
 0\\
0\\
0\\
0\\
0 \\
0\\
\vdots\\
0\\
-G\\
Q \\
\end{bmatrix}}^{A_{23}}_{S(-(rd+2))}.
$$
This proves that $ \reg(S/I_{D_0}^r)\geq rd $, in consequence $ \reg(I_{D_0}^r)\geq rd+1 $.
\end{proof}
\begin{theorem}
	\label{th:regConic}
Let $ D_j $ be as in Lemma \ref{distpointsn2}. If $ r\geq2  $, then
\begin{itemize}
	\item[(1)] $ \reg(I_{D_0}^r)=rd+1$,
	\item[(2)] $ \reg(I_{D_1}^r)=rd$.
\end{itemize}
\end{theorem}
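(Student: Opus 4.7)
The plan is to assemble the theorem as an almost immediate corollary of the bounds already established, with no additional calculation required; the work has effectively been done in Proposition~\ref{boundforreg}, Proposition~\ref{propreg2}, and Lemma~\ref{uperd0n2}.

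For part (1), I would begin by invoking Proposition~\ref{boundforreg} to obtain the upper bound
\[ \reg(I_{D_0}^r) \leq \reg(I_{D_0}) + (r-1)d, \]
and then substitute $\reg(I_{D_0}) = d+1$ from Proposition~\ref{propreg2} to conclude $\reg(I_{D_0}^r) \leq rd + 1$. The matching lower bound $\reg(I_{D_0}^r) \geq rd+1$ is precisely the content of Lemma~\ref{uperd0n2}, so the two sandwich together to give equality.

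For part (2), the structure is identical but slightly simpler: the upper bound from Proposition~\ref{boundforreg} combined with $\reg(I_{D_1}) = d$ from Proposition~\ref{propreg2} yields $\reg(I_{D_1}^r) \leq rd$, while the lower bound $\reg(I_{D_1}^r) \geq rd$ is already part of the statement of Proposition~\ref{boundforreg}. So the equality $\reg(I_{D_1}^r) = rd$ follows immediately.

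Since everything reduces to plugging the values from Proposition~\ref{propreg2} into the inequalities of Proposition~\ref{boundforreg} and (for $j=0$) appealing to Lemma~\ref{uperd0n2}, there is no real obstacle at the level of the theorem itself. The genuine difficulty lay in Lemma~\ref{uperd0n2}, where one had to exhibit explicit second syzygies of $S/I_{D_0}^r$ of degree $rd+1$ via the Koszul-type matrix $A_2$ built from $Q$ and $G$ to push the regularity of $S/I_{D_0}^r$ strictly above $rd-1$; once that refined lower bound is in hand, the theorem assembles itself in a few lines.
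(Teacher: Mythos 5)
Your proposal is correct and matches the paper's own proof exactly: both parts are obtained by combining the lower bound $rd\leq\reg(I_{D_j}^r)$ and the upper bound $\reg(I_{D_j}^r)\leq\reg(I_{D_j})+(r-1)d$ from Proposition~\ref{boundforreg} with the values of $\reg(I_{D_j})$ from Proposition~\ref{propreg2}, and for $j=0$ the sharper lower bound $rd+1$ from Lemma~\ref{uperd0n2}. You have also correctly identified that the real content lies in Lemma~\ref{uperd0n2}, the theorem itself being a direct assembly of these ingredients.
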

\begin{proof}
(1) follows from Propositions \ref{boundforreg},\ref{propreg2} and Lemma \ref{uperd0n2}. 
The proof of (2) follows directly from Propositions \ref{boundforreg},\ref{propreg2}.
\end{proof}

\section{Regularity of points on a TCC}
\label{sec:regTCC}
Let $ n=3 $. In this section we study the $ \reg(I_{D_j}^r) $, where $ D_j $ is a set of points $ 3d-j $ lie on the twisted cubic curve $ C $ defined by the following ideal,  
$$I=\left\langle x_2^2-x_1x_3,x_1x_2-x_0x_3,x_1^2-x_0x_2\right\rangle= \left\langle Q_1,Q_2,Q_3\right\rangle.$$
 \begin{lemma}\label{distincpoint3}
 The ideal $ I_{D_j} $ defines the set  $ D_j $ of  $3d-j$ distinct points on $ C $ for $ j=0,1,2 $.
 	 $$ I_{D_j}=\begin{cases}
 	I+\left\langle (x_2-x_1)(x_0^{d-1}-x_3^{d-1})\right\rangle,    \quad \mbox{if}\   j=0\\
 	I+\left\langle x_2(x_0^{d-1}-x_3^{d-1}),x_1(x_0^{d-1}-x_3^{d-1})\right\rangle,   \ \ \qquad \mbox{if}\ j=1\\
 	I+\left\langle x_2(x_0^{d-1}-x_3^{d-1}),x_1(x_0^{d-1}-x_3^{d-1}),x_0(x_0^{d-1}-x_3^{d-1})\right\rangle, \quad \mbox{if}\   j=2.
 	\end{cases}
 	$$
 \end{lemma}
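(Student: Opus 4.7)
The plan is to treat the three cases $j\in\{0,1,2\}$ separately by parametrizing the twisted cubic via $\nu_3\colon[s:t]\mapsto[s^3:s^2t:st^2:t^3]$ and then explicitly describing the zero locus on $C$ of the generators adjoined to $I$. In every case, Lemma~\ref{distinctpoints} supplies the $3(d-1)$ distinct points on $C$ cut out by $\{x_0^{d-1}-x_3^{d-1}=0\}$, namely $[\rho^3:\rho^2:\rho:1]$ as $\rho$ runs over the $3(d-1)$-th roots of unity. Only the additional $3-j$ points contributed by the other generators, together with their disjointness from the previous set, remain to be verified.

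For $j=2$, modulo $I$ the three added generators factor as $(x_0^{d-1}-x_3^{d-1})\cdot\langle x_0,x_1,x_2\rangle$, so the zero locus on $C$ is the union of the $3(d-1)$ earlier points with $\{x_0=x_1=x_2=0\}\cap C=\{[0:0:0:1]\}$. Since this last point has $x_0=0$ and $x_3=1$ it is not a zero of $x_0^{d-1}-x_3^{d-1}$, yielding $3(d-1)+1=3d-2$ distinct points. The case $j=1$ is analogous: the added ideal factors as $(x_0^{d-1}-x_3^{d-1})\cdot\langle x_1,x_2\rangle$, the extra locus is $\{x_1=x_2=0\}\cap C=\{[1:0:0:0],[0:0:0:1]\}$, and neither of these two points lies on $\{x_0^{d-1}=x_3^{d-1}\}$ for the same coordinate reason, so we obtain $3d-1$ distinct points.

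For $j=0$ the sole extra generator is $(x_2-x_1)(x_0^{d-1}-x_3^{d-1})$; the linear factor $x_2-x_1=st(t-s)$ cuts $C$ in the three points $[1:0:0:0]$, $[0:0:0:1]$, and $[1:1:1:1]$, which one combines with the $3(d-1)$ points coming from the other factor. Once the set-theoretic description of the vanishing locus is established in each case, containment of the listed ideal in the radical ideal of $D_j$ is immediate from construction, and the reverse containment follows either by comparing Hilbert functions (both sides being zero-dimensional of the same length $3d-j$) or by writing out an explicit minimal free resolution along the lines of Proposition~\ref{propreg2}.

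The step I expect to be most delicate is the disjointness check in the $j=0$ case: the point $[1:1:1:1]$ corresponds to $s/t=1$, which is a $3(d-1)$-th root of unity for every $d\geq 2$, so the two factors of the adjoined hypersurface share a common zero on $C$ and a naive union count produces only $3d-1$ distinct points. To recover the claimed $3d$, one must either replace $x_2-x_1$ by a generic linear form $ax_1+bx_2$ whose third zero on $C$ avoids the $3(d-1)$-th roots of unity, or argue scheme-theoretically (after saturating) that the listed ideal nevertheless defines the required reduced set of $3d$ general points. Handling this subtlety cleanly, without disrupting the uniform presentation of the three cases, is the chief technical point.
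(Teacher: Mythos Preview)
Your approach---parametrize $C$ via $\nu_3$, invoke Lemma~\ref{distinctpoints} for the $3(d-1)$ points coming from $x_0^{d-1}-x_3^{d-1}$, and then check the extra $3-j$ points case by case---is exactly the paper's strategy. For $j=1$ and $j=2$ your argument and the paper's are essentially identical, and both are correct.

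The delicate point you flag in the $j=0$ case is real, and in fact the paper's own proof glosses over it. The paper asserts that ``an elementary calculation shows that planes $\{x_2-x_1=0\}$ and $\{x_0 - \xi_{\alpha}x_3=0\}$ do not meet $C$ at the same point,'' but this is false for the factor with $\xi_\alpha=1$: the point $[1:1:1:1]\in C$ satisfies both $x_2-x_1=0$ and $x_0-x_3=0$. On the parameter line, the pullback $st(t-s)\bigl(s^{3(d-1)}-t^{3(d-1)}\bigr)$ has a double zero at $s=t$, so the scheme cut out by the displayed $I_{D_0}$ is non-reduced at $[1:1:1:1]$ and supports only $3d-1$ distinct points, not $3d$. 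Your diagnosis is therefore correct, and the paper's ``elementary calculation'' does not hold as stated.

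Your proposed repair---replace $x_2-x_1$ by a linear form in $x_1,x_2$ (or more generally a hyperplane transverse to $C$) whose three zeros on $C$ avoid the $3(d-1)$-th roots of unity---is the clean fix and is entirely in the spirit of the paper, which only needs \emph{some} explicit ideal of $3d$ general points on $C$ with a single degree-$d$ generator beyond $I$. The alternative you mention (arguing scheme-theoretically with the non-reduced structure) would not salvage the literal claim that $D_0$ consists of $3d$ \emph{distinct} points, so the first route is the right one.
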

 \begin{proof}
 	We divide the proof into three cases as the following.
 	\begin{itemize} 
 		\item Let $ j=0 $. By Lemma \ref{distinctpoints}, an elementary calculation shows that planes $\{x_2-x_1=0\}$ and $\{x_0 - \xi_{\alpha}x_3=0\}$ do not meet $C$ at the same point for $\alpha={1,2,\ldots,d-1} $. Since the plane $ \{x_2-x_1=0\} $ does not contain any tangent line to $ C $, we conclude that $  \{(x_2-x_1)(x_0^{d-1}-x_3^{d-1})=0 \}$ intersects $ C $ at $ 3(d-1)+3=3d $ points.
 		
 		\item Let $ j=1 $. we have that  $$\langle x_2(x_0^{d-1}-x_3^.{d-1}),x_1(x_0^{d-1}-x_3^{d-1})\rangle=\langle x_2,x_1\rangle\langle x_0^{d-1}-x_3^{d-1}\rangle.$$
 		One can see that the line $ \{\left\langle x_2,x_1\right\rangle\} $ is not tangent to $ C $, moreover by Lemma \ref{distinctpoints}, we have that $\{\left\langle x_2,x_1\right\rangle\}\cap \{x_0^{d-1}-x_3^{d-1}=0\}\cap C =\emptyset $. Therefore, $ \{\langle x_2,x_1\rangle\langle x_0^{d-1}-x_3^{d-1}\rangle \} $ intersects $ C $ at $ 3(d-1)+2 =3d-1$. 
 		
 		\item Let $ j=2 $. Since the point $\langle x_2,x_1,x_0\rangle\not\in  \{x_0^{d-1}-x_3^{d-1}=0\} $, therefore by Lemma \ref{distinctpoints} the desired result follows.
 	\end{itemize}
 	This completes the proof.\qedhere
 \end{proof}
 \begin{proposition}\label{propreg}
 	Let $ D_j $ be as in Lemma \ref{distincpoint3}. Then,
 	$$ \reg(I_{D_j})=\begin{cases}
 	d+1,  \quad \mbox{if}\   j=0,1\\
 	d,  \ \ \qquad \mbox{if}\ j=2. 
 	\end{cases}
 	$$
 \end{proposition}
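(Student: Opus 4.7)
The plan is to bypass the explicit minimal-resolution computations of Proposition \ref{propreg2} and argue via Hilbert functions. Recall the remark from Section \ref{sec:prelim}: for a zero-dimensional ideal $J$, $\reg(J)$ equals the smallest $t \geq 0$ with $\Delta \hilb(S/J,t) = 0$. So it suffices to compute $\hilb(S/I_{D_j},t)$.

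The central input I would establish is the formula
\[
\hilb(S/I_{D_j},t) \;=\; \min\{\, s,\ 3t+1 \,\}, \qquad s = 3d - j,
\]
valid for any set of $s$ distinct points on the twisted cubic $C$. To prove it, pullback along the Veronese \eqref{verembed} identifies $(S/I_C)_t$ with $H^0(\PP^1, \mathcal{O}_{\PP^1}(3t))$, a space of dimension $3t+1$. Evaluation at the $s$ distinct points of $D_j$ (distinct by Lemma \ref{distincpoint3}) is a linear map $H^0(\PP^1, \mathcal{O}(3t)) \to K^s$; it is injective when $3t+1 \leq s$ (a nonzero form of degree $3t$ has at most $3t$ roots), and surjective when $3t+1 \geq s$ (Lagrange interpolation at distinct points). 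Tracking the kernel $(I_{D_j}/I_C)_t$ through the resulting exact sequence yields the claimed formula.

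Given the Hilbert function, $\Delta \hilb(S/I_{D_j},t) = 0$ exactly when both $\hilb(t) = s$ and $\hilb(t-1) = s$, equivalently $3(t-1)+1 \geq s$, i.e., $t \geq (s+2)/3$. The minimum such integer is $\lceil (s+2)/3 \rceil$. Substituting $s = 3d - j$ gives $d+1$ for $j = 0$, $d+1$ for $j = 1$, and $d$ for $j = 2$, matching the proposition.

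The only non-routine step is the Hilbert function formula, and it rests solely on the distinctness of the points, already guaranteed by Lemma \ref{distincpoint3}. An alternative following the style of Proposition \ref{propreg2} would be to build minimal free resolutions case by case via mapping cones applied to the short exact sequences
\[
0 \to (S/I)(-d) \xrightarrow{\cdot G} S/I \to S/I_{D_0} \to 0
\]
for $j=0$ (and analogues for $j=1,2$ making use of the colon computations $(I_{D_j}:H) = I + (x_1,x_2)$ and $(I_{D_j}:H) = I + (x_0,x_1,x_2)$ respectively), starting from the Eagon--Northcott resolution of $S/I$; this produces explicit Betti tables but is considerably more cumbersome in $\PP^3$, whereas the Hilbert-function argument is uniform across all three cases.
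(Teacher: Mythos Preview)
Your argument is correct and takes a genuinely different route from the paper. The paper proves Proposition~\ref{propreg} by writing down, case by case and with the aid of \verb|Macaulay2|, the explicit syzygy matrices $A_1,A_2,A_3$ for $S/I_{D_j}$ and reading the regularity off the resulting minimal free resolution. You instead compute the Hilbert function directly from the geometry: the identification $(S/I_C)_t\cong H^0(\PP^1,\mathcal O(3t))$ together with the elementary injectivity/surjectivity of the evaluation map at $s$ distinct points gives $\hilb(S/I_{D_j},t)=\min\{3t+1,s\}$, and the regularity follows from the first-difference criterion quoted in Section~\ref{sec:prelim}.

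What each approach buys: your Hilbert-function argument is uniform in $j$, entirely elementary, and immediately extends to points on the rational normal curve in any $\PP^n$ (indeed it yields exactly the formula the paper records later as a remark for $n\geq 4$). The paper's resolution computation is heavier and case-dependent, but it delivers strictly more: the full Betti tables, and in particular the corollary that $I_{D_1}$ is Gorenstein, which your method does not see. If you want to recover that corollary you would still need the resolution for $j=1$, so your mapping-cone sketch at the end is not wasted effort.
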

 \begin{proof}
 	We are looking for minimal free resolutions of the form
 	$$0\xrightarrow{} F_3\xrightarrow{A_3}F_2\xrightarrow{A_2}F_1\xrightarrow{A_1} F_0\xrightarrow{}R / I_{D_j}\xrightarrow{}0$$
 	for any ideal $I_{D_j}$. Since for any $j$ we know the generators of ideals $I_{D_j}$, we can write matrices $A_i$ explicitly. 
 	
 	For the sake of the completeness, denote by $H=x_0^{d-1}-x_3^{d-1}$. With some aids of any algebraic software program, such as \verb|Macaulay2| \cite{MC2}, we compute the syzygy matrices of $S/I_{D_j}.$
 	In case of $j=0$ we have
 		\[
 		A_1= \left[\begin{array}{cccc}
 		Q_1 &		Q_2 & 		Q_3 & 		(x_2-x_1)H
 		\end{array} \right],
 		\]
 		\begin{small}
 		$$
 		A_2= \left[ \begin{array}{ccccc}
 		x_1 & x_0 & x_2x_3^{d-1}  & -x_2x_3^{d-1} & 0 \\
 		-x_2&-x_1&x_2(H-x_0^{d-2}x_1)& x_1(H-x_0^{d-2}x_2)&0\\
 		x_3 & x_2 &  x_0^{d-2}x_2^2 - x_3H & x_0^{d-2}x_2^2 - x_3H & (x_1-x_2)H \\
 		0 & 0 & -Q_1 & -Q_1-Q_2 & -Q_1
 		\end{array} \right],
 		$$
 	    \end{small}
 		\[ 
 		A_3= \left[ \begin{array}{cc}
		x_2H&x_0H\\
		-x_0^{d-2} x_1 x_2&      -x_1H-x_2 x_3^{d-1}\\
		x_1+x_2&              x_0+x_1\\
		-x_2&                  -x_1\\
		x_3&                   x_2
		\end{array} \right],
		\]
	therefore the minimal free resolution is
 	$$0\xrightarrow{}S^2(-d-3)\xrightarrow{A_3}S^2(-3)\oplus S^3(-d-2)\xrightarrow{A_2}S^3(-2)\oplus S(-d)\xrightarrow{A_1}S\xrightarrow{}S / I_{D_0}\xrightarrow{}0.$$
 	While for $j=1$ there is
 	\begin{small}
 		\[
 		A_1= \left[ \begin{array}{ccccc}
 		Q_1 & 		Q_2 &  		Q_3 & 		x_2H & 		x_1H
 		\end{array} \right],
 		\]
 		\[
	 	A_2= \left[ \begin{array}{ccccc}
	 	x_1& 0&    x_0& -x_3^{d-1}&     0\\
	 	-x_2&0&    -x_1&x_0^{d-2}x_1& 0\\
	 	x_3& 0&    x_2& -x_0^{d-2}x_2&H\\
	 	0&    x_1& 0&    -x_2&       x_0\\
	 	0&    -x_2&0&    x_3&        -x_1
	 	\end{array} \right],
 		A_3= \left[ \begin{array}{c}
 		x_1H\\
 		-Q_2\\
 		-x_0^{d-2}x_1^2+x_2x_3^{d-1}\\
 		-Q_3\\
 		Q_1
 		\end{array} \right].
 		\]
 	\end{small}
 Thus
 $$0\xrightarrow{}S(-d-3)\xrightarrow{A_3}S^2(-3)\oplus S^3(-d-1)\xrightarrow{A_2}S^3(-2)\oplus S^2(-d)\xrightarrow{A_1}S\xrightarrow{}S / I_{D_1}\xrightarrow{}0.$$
 	For the last remaining case, $j=2$, the matrices are the following
 	\begin{small}
 		\[
 		A_1= \left[ \begin{array}{cccccc}
 		Q_1 & 		Q_2 & 		Q_3 & 		x_2H & 		x_1H &
 		x_0H
 		\end{array} \right],
 		\]
 		\[
 		A_2= \left[ \begin{array}{cccccccc}
 		x_1& 0&    x_0& 0&    0&    -x_3^{d-1}&     0&            0\\
 		-x_2&0&    -x_1&0&    0&    x_0^{d-2}x_1& H&0\\
 		x_3& 0&    x_2& 0&    0&    -x_0^{d-2}x_2&0&            H\\
 		0&    x_1& 0&    x_0& 0&    -x_2&       0&            0\\
 		0&    -x_2&0&    0&    x_0& x_3&        -x_2&        -x_1\\
 		0&    0&    0&    -x_2&-x_1&0&           x_3&         x_2
 		\end{array} \right],
 		A_3= \left[ \begin{array}{ccc}
 		0&    0&     H\\
 		x_0& 0&     -x_2\\
 		0&    x_3^{d-1}&-x_0^{d-2}x_1\\
 		-x_1&x_2&  0\\
 		x_2& -x_3& 0\\
 		0&    x_0&  -x_1\\
 		0&    -x_1& x_2\\
 		0&    x_2&  -x_3
 		\end{array} \right].
 		\]
 	\end{small}
 	Hence we can write,
 	$$0\xrightarrow{}S^3(-d-2)\xrightarrow{A_3}S^2(-3)\oplus S^6(-d-1)\xrightarrow{A_2}S^3(-2)\oplus S^3(-d)\xrightarrow{A_1}S\xrightarrow{}S / I_{D_2}\xrightarrow{}0,$$
 	By a straightforward calculation from the definition of regularity, we get the desired assertion.
 \end{proof}
The minimal free resolution of $I_{D_1}  $, calculated in the previous theorem, gives us immediately the following corollary.
 \begin{corollary}
 	The ideal $I_{D_1}  $ is a Gorenstein ideal.
 \end{corollary}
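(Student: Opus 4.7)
The plan is to read the corollary directly off the minimal graded free resolution of $S/I_{D_1}$ that was already written down in the proof of Proposition \ref{propreg}. The structural fact I will invoke is standard: for a homogeneous ideal $J\subset S$ with $S/J$ Cohen--Macaulay of codimension $c$, the ring $S/J$ (equivalently the ideal $J$) is Gorenstein if and only if the last free module $F_c$ in the minimal graded free resolution of $S/J$ has rank one. Indeed, dualizing the resolution against $S$ shows that the minimal number of generators of the canonical module $\omega_{S/J}\cong \operatorname{Ext}^c_S(S/J,\omega_S)$ equals $\operatorname{rank}(F_c)$, and a cyclic canonical module over a Cohen--Macaulay base forces Gorenstein-ness (equivalently, the Cohen--Macaulay type of $S/J$ equals one).

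To apply this criterion I would first record that $D_1$ is a reduced zero-dimensional subscheme of $\PP^3$, so $S/I_{D_1}$ is arithmetically Cohen--Macaulay of Krull dimension $1$, and by Auslander--Buchsbaum its projective dimension over $S=K[x_0,x_1,x_2,x_3]$ equals $4-1=3$. This matches the length of the resolution
$$0\longrightarrow S(-d-3)\xrightarrow{A_3} S^2(-3)\oplus S^3(-d-1)\xrightarrow{A_2} S^3(-2)\oplus S^2(-d)\xrightarrow{A_1} S\longrightarrow S/I_{D_1}\longrightarrow 0$$
displayed in Proposition \ref{propreg}, and inspection of the leftmost term shows that $F_3=S(-d-3)$ is free of rank one. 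The criterion then yields that $I_{D_1}$ is Gorenstein.

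There is essentially no obstacle here, since all the real work was already done when the matrices $A_1,A_2,A_3$ were computed; the corollary merely records the shape of the tail of the resolution. The one sanity check worth performing is the expected numerical symmetry for a Gorenstein resolution of length three, namely $\beta_0=\beta_3$ and $\beta_1=\beta_2$ with dually symmetric shifts. From the resolution one reads $\beta_0=\beta_3=1$ and $\beta_1=\beta_2=5$, and the shift $d+3$ of $F_3$ together with the shifts $(2,2,2,d,d)$ on $F_1$ and $(3,3,d+1,d+1,d+1)$ on $F_2$ are exchanged correctly under $j\mapsto (d+3)-j$, confirming the conclusion.
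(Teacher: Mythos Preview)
Your proposal is correct and follows essentially the same approach as the paper: the corollary is stated immediately after Proposition \ref{propreg} with the justification that the minimal free resolution computed there ``gives us immediately'' the result, i.e., one reads off that the last module $S(-d-3)$ has rank one. Your write-up is more detailed (making the Cohen--Macaulay hypothesis and the rank-one criterion explicit, and adding the symmetry check), but the underlying argument is the same.
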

\begin{lemma}\label{j=0}
	Let $ D_0 $ be as in Lemma \ref{distincpoint3}. Then, $ \reg(I_{D_0}^r)\geq rd+1 $ for $r\geq 2  $.
\end{lemma}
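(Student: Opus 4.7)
The plan is to realize $I_{D_0}^r$ as an extension of a shifted copy of the coordinate ring $R:=S/I$ of the twisted cubic, and then to extract the lower bound on $\reg$ from a single step of the long exact sequence of $\operatorname{Tor}(\cdot,k)$. The mechanism will be that the top-degree generator $G^r$ produces a non-vanishing Tor in internal degree $rd+2$, which forces $\reg(I_{D_0}^r)\geq rd+1$.

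First I would set up the short exact sequence. Since $I$ is the prime ideal of an irreducible curve and $G$ does not vanish identically on $C$, the image $\bar G\in R$ is a non-zerodivisor. Expanding $(I+(G))^r$ by the binomial theorem gives $I_{D_0}^r = I\cdot I_{D_0}^{r-1} + G^r\cdot S$, so $I_{D_0}^r/(I\cdot I_{D_0}^{r-1})$ is cyclic, generated in degree $rd$ by the class of $G^r$. Using primality of $I$ and the inclusion $I\cdot I_{D_0}^{r-1}\subseteq I$, one checks that the annihilator of this class equals $I$, producing
\[
0 \longrightarrow I\cdot I_{D_0}^{r-1} \longrightarrow I_{D_0}^r \longrightarrow R(-rd) \longrightarrow 0.
\]

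Next I would tensor with $k=S/\mathfrak{m}$ and track the long exact Tor sequence in internal degree $rd+2$. The Hilbert--Burch resolution $0\to S^2(-3)\to S^3(-2)\to S\to R\to 0$ shows that $\operatorname{Tor}_1^S(R,k)\cong k^3$ is concentrated in internal degree $2$ and $\operatorname{Tor}_2^S(R,k)\cong k^2$ in degree $3$, so after the shift by $-rd$,
\[
[\operatorname{Tor}_1^S(R(-rd),k)]_{rd+2}\cong k^3, \qquad [\operatorname{Tor}_2^S(R(-rd),k)]_{rd+2}=0.
\]
Lemma \ref{maxdegIDr} gives $\beta(I\cdot I_{D_0}^{r-1})\leq \beta(I)+\beta(I_{D_0}^{r-1})\leq 2+(r-1)d<rd+2$, so $[\operatorname{Tor}_0^S(I\cdot I_{D_0}^{r-1},k)]_{rd+2}=0$. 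The long exact Tor sequence in degree $rd+2$ then collapses to
\[
0\to [\operatorname{Tor}_1^S(I\cdot I_{D_0}^{r-1},k)]_{rd+2}\to [\operatorname{Tor}_1^S(I_{D_0}^r,k)]_{rd+2}\to k^3\to 0,
\]
which forces $\beta_{1,rd+2}(I_{D_0}^r)\geq 3$ and hence $\reg(I_{D_0}^r)\geq(rd+2)-1=rd+1$.

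The hard part will be the identification of the cokernel in the first step: the decomposition $I_{D_0}^r = I\cdot I_{D_0}^{r-1}+(G^r)$ is a routine binomial expansion, but the computation $\operatorname{Ann}_S(\overline{G^r})=I$ rests on $I$ being prime (so that $fG^r\in I$ implies $f\in I$) and on $G\notin I$, both immediate from $C$ being an irreducible curve on which $G$ does not vanish identically. Once this identification is in hand, the rest is a clean dimension chase. An alternative, more computational route would be to mirror Lemma \ref{uperd0n2} by writing down enough of the first and second syzygy matrices of $S/I_{D_0}^r$ to exhibit a minimal column of the form $(\ldots,G,\ldots,-Q_i,\ldots)^T$ between $e_{Q_iG^{r-1}}$ and $e_{G^r}$, of internal degree $rd+2$; but checking minimality by hand there is essentially the same content as the Tor argument above.
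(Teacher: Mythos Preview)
Your argument is correct and takes a genuinely different route from the paper.

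The paper works with the auxiliary zero-dimensional ideal $J=\langle Q_1,Q_2,Q_3,G^r\rangle$ and the short exact sequence $0\to I_{D_0}^r\to J\to J/I_{D_0}^r\to 0$. It then computes $\reg(J)=rd+1$ via the Hilbert function of $S/J$, invoking the geometric fact (from \cite{GMR83}) that $V(J)$ cannot contain $rd+2$ collinear points to rule out the alternative Hilbert function shape; finally it bounds $\reg(J/I_{D_0}^r)$ by comparing Hilbert functions, so that the maximum in the regularity inequality is attained by $\reg(I_{D_0}^r)$.

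Your approach instead isolates the top generator $G^r$ via the sequence
\[
0\longrightarrow I\cdot I_{D_0}^{r-1}\longrightarrow I_{D_0}^r\longrightarrow R(-rd)\longrightarrow 0,
\]
and reads off a nonzero $\beta_{1,rd+2}(I_{D_0}^r)$ directly from the Hilbert--Burch resolution of $R=S/I$. This is cleaner: it avoids the Hilbert function computation and the external collinearity lemma, and it even yields the sharper information $\beta_{1,rd+2}(I_{D_0}^r)\geq 3$, exhibiting the three Koszul-type syzygies $Q_iG^{r-1}\cdot G - G^r\cdot Q_i$ as the source of the regularity jump. The paper's approach, on the other hand, produces the full Hilbert function of the intermediate ideal $J$ as a by-product. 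Both rest on the same underlying reason: $G^r$ sits in degree $rd$ and its relations with $I$ force a first syzygy in degree $rd+2$.

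One small remark: you do not actually need Lemma~\ref{maxdegIDr} for the bound $\beta(I\cdot I_{D_0}^{r-1})\leq 2+(r-1)d$; it follows immediately from the obvious generating set $\{Q_i\cdot m:\ m\ \text{a monomial of degree }r-1\ \text{in }Q_1,Q_2,Q_3,G\}$.
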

\begin{proof}
Set $ G=(x_2-x_1)(x_0^{d-1}-x_3^{d-1}) $. The $ r $-th power of $ I_{D_0} $ is as the following
$$I^r_{D_0}=\left\langle Q_1^{r}, Q_1^{r-1}Q_2, \cdots , Q_1G^{r-1} , \cdots , Q_2G^{r-1} , \cdots , Q_3G^{r-1} , G^r\right\rangle .$$
Consider the 0-$ dimensional $ ideal $J =\left\langle Q_1,Q_2,Q_3,G^r\right\rangle  $. Since $ I_{D_0}^r\subset J $, therefore we have the following exact sequence:
$$0 \rightarrow I^r_{D_0} \rightarrow J \rightarrow \frac{J}{I^r_{D_0}} \rightarrow 0.$$
Hence we have that $$\operatorname{reg}(J) \leq \max \Big\{\operatorname{reg}\Big(\frac{J}{I^r_{D_0}} \Big), \operatorname{reg}(I^r_{D_0})\Big\}.$$
\textit{Claim}: $ \reg(J)= rd+1 $. Since $ I_C\subset J $ we have that $ [I_C]_t=[J]_t $ for $ t\leq rd-1 $, and it is known that $\hilb(S/I_C,t)=3t+1 $ for $ t\geq 0 $, therefore $ \hilb(S/J,t)=3t+1 $ for $ t\leq rd-1 $. We know that the degree of $ J $ is $ 3rd $, therefore either $ \hilb(S/J,rd)$ is $ 3rd-1$ or $3rd$. By contradiction assume that $\hilb(S/J,rd)= 3rd-1  $. Hence, the first difference of the Hilbert function of $ S/J $ is as follows, 
$$ \begin{array}{cccccccc}
1 &3&3&3&\cdots & 1 &1& 0
\end{array}.$$
So, by \cite[Proposition 5.2]{GMR83} follows  that $ V(J) $ contains a subset of $ rd+2 $ collinear points having multiplicities $ r $. It is a contradiction with the fact that  $ V(J) $ has only subsets of at most $ 2r$ collinear points. Therefore,
\begin{center}
	\begin{tabular}{c|ccccccccc}
		$ t $& 0 &1&2&3&$ \cdots $& $rd-1$ &$rd$ &$rd+1 $& $ \cdots $\\
		\hline
		$\hilb(S/J,t)$&1&4&7&10 & $ \cdots $&$3rd-2$ &$3rd$& $ 3rd $& $ \cdots $
	\end{tabular}.
\end{center}
We conclude that $ \reg(J)= rd+1 $. 

We know that  $$\hilb(S/(J/I_{D_0}^r),t)= \hilb(S/I^r_{D_0},t)-\hilb(S/J,t), \qquad \forall t\geq 0.$$ Since the set minimal generators of $ I^r_{D_0} $ has only one form of degree $\beta(I_{D_0}^r)=rd$, we conclude that $\hilb(S/I^r_{D_0},t)-\hilb(S/J,t)=c\in \mathbb{Z}^+,$ for all $ t\geq rd$. Therefore, the Hilbert function of $S/(J/I_{D_0}^r)$ is partially as follows:
	\begin{center}
	\begin{tabular}{c|ccccccccc}
		$ t $& 0 &1&2&3&$ \cdots $& $rd$ &$rd+1$ &$rd+2$ & $ \cdots $\\
		\hline
		$\hilb(S/(J/I_{D_0}^r),t)$&0&0&3&10 & $ \cdots $& $c$ &$ c $ &$c$ &$ \cdots $
	\end{tabular}.
\end{center}
This follows that $ \reg\Big(\frac{J}{I_{D_0}^r}\Big) $ is at most $ rd-1 $. From Proposition \ref{boundforreg}, we know that $ \reg(I_{D_0}^r)\geq rd $, hence, $\reg\Big(\frac{J}{I^r_{D_0}} \Big)<\operatorname{reg}(I^r_{D_0})  $. Therefore, 
$$rd+1=\operatorname{reg}(J) \leq \max \Big\{\operatorname{reg}\Big(\frac{J}{I^r_{D_0}} \Big), \operatorname{reg}(I^r_{D_0})\Big\}=\operatorname{reg}(I^r_{D_0}).$$
The proof is done.
\end{proof}

\begin{theorem}
	\label{th:regTCC}
Let $ D_j $ be as in Lemma \ref{distincpoint3}. If $ r\geq2  $ and $ d\geq2  $, then
\begin{itemize}
\item[(1)] $ \reg(I_{D_0}^r)=rd+1$,
\item[(2)] $ \reg(I_{D_1}^r)=\reg(I_{D_2}^r)=rd$,
\end{itemize}
\end{theorem}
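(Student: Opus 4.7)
The plan is to split the theorem into three cases indexed by $j\in\{0,1,2\}$ and to reduce each case to the general sandwich estimate of Proposition \ref{boundforreg}, combined with the values of $\reg(I_{D_j})$ computed in Proposition \ref{propreg} and the lower bound of Lemma \ref{j=0}.

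For case (1), I would apply Proposition \ref{boundforreg} with $\reg(I_{D_0})=d+1$ from Proposition \ref{propreg} to get the upper bound
$$\reg(I_{D_0}^r)\leq \reg(I_{D_0})+(r-1)d=(d+1)+(r-1)d=rd+1,$$
and then pair this with the matching lower bound $\reg(I_{D_0}^r)\geq rd+1$ established in Lemma \ref{j=0}. The two bounds coincide, yielding $\reg(I_{D_0}^r)=rd+1$.

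For case (2) with $j=2$, the argument is the cleanest: Proposition \ref{propreg} gives $\reg(I_{D_2})=d$, so Proposition \ref{boundforreg} produces the upper bound
$$\reg(I_{D_2}^r)\leq d+(r-1)d=rd,$$
which already agrees with the lower bound $rd$ built into the same proposition. Hence $\reg(I_{D_2}^r)=rd$ with no further work.

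The case (2) with $j=1$ is the main obstacle, because Proposition \ref{boundforreg} with $\reg(I_{D_1})=d+1$ only delivers the weaker upper bound $rd+1$, one too many. To close this gap my plan is to use the containment $I_{D_1}\subset I_{D_2}$, coming from $D_2\subset D_1$, together with the short exact sequence
$$0\to I_{D_1}^r\to I_{D_2}^r\to I_{D_2}^r/I_{D_1}^r\to 0,$$
which gives the standard regularity estimate
$$\reg(I_{D_1}^r)\leq \max\!\bigl\{\reg(I_{D_2}^r),\,\reg(I_{D_2}^r/I_{D_1}^r)+1\bigr\}.$$
Since the $j=2$ case already proved $\reg(I_{D_2}^r)=rd$, the problem reduces to showing that $\reg(I_{D_2}^r/I_{D_1}^r)\leq rd-1$. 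Because $I_{D_2}=I_{D_1}+(x_0H)$ with $H=x_0^{d-1}-x_3^{d-1}$, the quotient $I_{D_2}^r/I_{D_1}^r$ is generated by products of the form $(x_0H)^k\cdot g$ with $g\in I_{D_2}^{r-k}$ and $k\geq1$, and one can analyze its graded structure using the explicit resolutions of $I_{D_1}$ and $I_{D_2}$ recorded in Proposition \ref{propreg}. A Hilbert function argument patterned on Lemma \ref{j=0}, using an auxiliary zero-dimensional ideal containing $I_{D_1}^r$ and bounded Hilbert function growth, is an alternative route to the same bound. The technical heart of the theorem lies here: controlling the top Betti number of $I_{D_1}^r$ so that it does not inherit the $+1$ contribution coming from the final term $S(-d-3)$ in the Gorenstein resolution of $S/I_{D_1}$ at position $3$.
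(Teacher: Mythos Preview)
Your treatment of $j=0$ and $j=2$ matches the paper's proof exactly: both cases are immediate from sandwiching Proposition \ref{boundforreg} against Proposition \ref{propreg} (and Lemma \ref{j=0} for the lower bound when $j=0$).

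The gap is in the $j=1$ case. You correctly identify that Propositions \ref{boundforreg} and \ref{propreg} alone leave you one off, and your short exact sequence
\[
0\to I_{D_1}^r\to I_{D_2}^r\to I_{D_2}^r/I_{D_1}^r\to 0
\]
together with the bound $\reg(I_{D_1}^r)\leq\max\{\reg(I_{D_2}^r),\,\reg(I_{D_2}^r/I_{D_1}^r)+1\}$ is a legitimate reduction. But the crucial inequality $\reg(I_{D_2}^r/I_{D_1}^r)\leq rd-1$ is never actually established: you appeal to the resolutions in Proposition \ref{propreg}, which are resolutions of $I_{D_j}$, not of $I_{D_j}^r$ or of the quotient, and the ``Hilbert function argument patterned on Lemma \ref{j=0}'' remains only a suggestion. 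As it stands this is not a proof of the $j=1$ case, only an outline of one possible strategy with its hardest step missing.

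The paper takes a different route for $j=1$. It exploits the observation (recorded as a corollary of the free resolution in Proposition \ref{propreg}) that $I_{D_1}$ is \emph{Gorenstein}, and then invokes \cite[Proposition 1.12.6]{Char04}, a result of Chardin giving sharper regularity bounds for powers in that situation. This is precisely what closes the $+1$ gap without any direct analysis of $I_{D_2}^r/I_{D_1}^r$. If you want to salvage your approach, you would need an independent argument for the quotient's regularity; otherwise, the Gorenstein property of $I_{D_1}$ is the structural fact you are missing.
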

\begin{proof}
The proof of (1) is a direct consequence of  Propositions \ref{boundforreg},\ref{propreg} and Lemma \ref{j=0}. The proof for $j=1$ follows from Propositions \ref{propreg},\ref{boundforreg} and \cite[Proposition 1.12.6]{Char04}. The last remaining case for $j=2$ similarly the result follows from Propositions \ref{boundforreg} and \ref{propreg}. The proof is complete.
\end{proof}
\begin{corollary}
	For the ideals $I_{D_j}$ defined in Lemma \ref{distincpoint3}, we have that 
	$$\areg(I_{D_j})=\lim_{r \rightarrow \infty} \frac{\reg(I_{D_j}^r)}{r}=d.$$	
\end{corollary}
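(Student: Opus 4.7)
The corollary is an immediate consequence of Theorem \ref{th:regTCC} together with Definition \ref{def:aReg}. My plan is simply to substitute the explicit formulas from the theorem into the defining limit of $\areg$ and evaluate case by case.

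Concretely, for $j=0$ the theorem yields $\reg(I_{D_0}^r) = rd+1$ for every $r \geq 2$, so
$$\areg(I_{D_0}) = \lim_{r \to \infty} \frac{rd+1}{r} = \lim_{r \to \infty}\left(d + \frac{1}{r}\right) = d.$$
For $j \in \{1,2\}$ the theorem yields $\reg(I_{D_j}^r) = rd$ for every $r \geq 2$, so the quotient $\reg(I_{D_j}^r)/r$ is identically equal to $d$ for all such $r$, and the limit is $d$ on the nose. Since the asymptotic regularity depends only on the tail of the sequence $\{\reg(I_{D_j}^r)/r\}_{r\geq 1}$, the restriction $r \geq 2$ built into Theorem \ref{th:regTCC} is harmless.

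There is essentially no obstacle in this argument: all the genuinely difficult work has already been performed upstream, namely the lower bound from Lemma \ref{j=0}, the upper bound from Proposition \ref{boundforreg}, and the invocation of \cite[Proposition 1.12.6]{Char04} for the $j=1$ case. Once those results have been packaged into Theorem \ref{th:regTCC}, the corollary reduces to the one-line arithmetic check above, and the same computation simultaneously covers all three values of $j$.
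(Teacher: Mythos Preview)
Your proposal is correct and matches the paper's own treatment: the corollary is stated without proof immediately after Theorem \ref{th:regTCC}, so the paper evidently regards it as the same one-line limit computation you carry out.
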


\subsection*{Remarks in $ \PP^n $}
It is natural to ask about the regularity of the same type of ideals in higher projective spaces. However, simply calculations can show that the formula for $\reg(I_{D_j}^r)$, with $r>1$, is much more complicated than for cases of $ \PP^2 $ and $ \PP^3 $, and can not be easily described. Thus, we dedicate this section to be a leading step on further investigations in this subject, by proving the lemma which concerns $\reg(I_{D_j})$.
\begin{definition}
	\label{def:ID in PN}
	Let $ n\geq 4 $ and $ 
	0\leq j\leq n-1$. Let $ I_{D_j} $ be the ideal of a set $ nd-j $ points on $ C $ defined by the ideal $ I=I_2(M) $ as follows,
	$$\begin{cases}
	I_{D_0}=I+\left\langle (x_{n-1}-x_{1})(x_0^{d-1}-x_n^{d-1})\right\rangle,\\
	I_{D_1}=I+\left\langle x_{n-1}(x_0^{d-1}-x_n^{d-1}),x_{n-2}(x_0^{d-1}-x_n^{d-1})\right\rangle,\\
	I_{D_j}=I_{D_{j-1}}+\left\langle x_{n-j-1}(x_0^{d-1}-x_n^{d-1})\right\rangle, \quad \mbox{if}\   2\leq j\leq n-1.
	\end{cases}
	$$
\end{definition}
One can easily observe that the proof of the fact that ideals $I_{D_j}$ indeed describes the set of $ nd-j $ distinct points can be mimic from the proof of Lemma \ref{distincpoint3}. Also the next remark is similar to the result obtained in Proposition \ref{propreg}.
\begin{remark}
	For ideals $I_{D_j}$ defined as in Definition \ref{def:ID in PN}, one can compute the $ \reg(I_{D_j}) $ as in Proposition \ref{propreg} by writing their free resolutions or directly by computing their Hilbert functions,  
	$$ \reg(I_{D_j})=\begin{cases}
	d+1,  \quad \mbox{if}\    0\leq j<n-1  \\
	d,  \ \ \qquad \mbox{if}\ j=n-1. 
	\end{cases}
	$$
\end{remark}

\section{Symbolic defect}

\label{sec:sdefect}
Comparing symbolic and ordinary powers of ideals of points in $\PP^N$ has became very popular in recent years. There are a few different concepts which are concerning ``ideal containment problem". In this section we want to analyse one of them in the case of ideals $I_{D_j}$. Let us recall first the definition of symbolic power of ideal.

\begin{definition}
	Let $I$ be a homogeneous ideal in a polynomial ring $R$.
	For $m\geq 1$, the $m$-th symbolic power of $I$ is the ideal 
	$$ I^{(m)} = R \cap \left( \bigcap_{\mathfrak{p} \in \Ass(I)} (I^{m})_{\mathfrak{p}} \right), $$
	where the intersection is taken over all associated primes $\mathfrak{p}$ of $I$.
\end{definition}

It is known that for any $m$ the inclusion $I^m \subseteq I^{(m)}$ holds, but the reverse may fail. Therefore it is natural to ask about the number of generators in the module $I^{(m)}/I^{m}$.

\begin{definition}
We define the $m$-th symbolic defect of $I$ for any integer $m\geq 2$ to be 
$$\sd(I, m)=\text{ the number of minimal generators of } I^{(m)}/I^{m}.$$
\end{definition}

We refer to \cite{GGS19} readers interested in this subject.

Motivated by the result of relation between symbolic and ordinary powers obtained for ideal of $s$ general points on smooth conic \cite{BH10}, we take another step towards description of this behaviour for ideals of $s$ general points on a TCC, by analysing the symbolic defect of $I_{D_j}$. What we can prove for those ideals is the following:

\begin{theorem}\label{cont}
	Let $I_{D_j}$ be the ideals of points defined in Lemma \ref{distincpoint3}. Then
		\begin{itemize}
		\item[(1)] $\sd(I_{D_1}, m)>0$, if $ m\geq3 $.
		\item[(2)] $\sd(I_{D_j}, m)>0$ for $ j=0,2$.
	\end{itemize}
\end{theorem}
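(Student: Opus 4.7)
The plan is to exhibit, for each stated case, an explicit witness $g_j\in I_{D_j}^{(m)}\setminus I_{D_j}^m$, thereby forcing the module $I_{D_j}^{(m)}/I_{D_j}^m$ to be nonzero and hence $\sd(I_{D_j},m)>0$. My first step would be a reformulation: since $D_j$ is a reduced zero-dimensional scheme, the symbolic power $I_{D_j}^{(m)}$ equals the saturation of $I_{D_j}^m$ with respect to the irrelevant ideal $\mathfrak{m}=\langle x_0,x_1,x_2,x_3\rangle$, so the strict inclusion is equivalent to $\mathfrak{m}\in\Ass(S/I_{D_j}^m)$, equivalently $\operatorname{depth}(S/I_{D_j}^m)=0$.

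To construct the candidates, I would write $I_{D_j}=I_C+J_j$ using Lemma \ref{distincpoint3}, where $J_0=\langle(x_2-x_1)H\rangle$, $J_1=\langle x_1H, x_2H\rangle$, $J_2=H\cdot\langle x_0,x_1,x_2\rangle$, and $H=x_0^{d-1}-x_3^{d-1}$. Then $I_{D_j}^m=\sum_{k=0}^{m}J_j^{\,k}\,I_C^{\,m-k}$, and the key structural observation is that every generator of $J_j$ is divisible by $H$. Thus each $m$-fold product of generators of $I_{D_j}$ with $a$ quadratic and $b=m-a$ degree-$d$ factors sits in $H^{\,b}\cdot S$ and has degree $2a+db$. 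Meanwhile, any product $H^{\,a}\cdot P$ with $P\in I_C^{m-a}$ automatically belongs to $I_{D_j}^{(m)}$ by a local computation at the $3(d-1)$ points of $V(H)\cap C$ (where $H$ is transverse to $C$), with a corrective power of the local maximal ideal at the points of $D_j$ lying outside $V(H)$ (namely $[1{:}0{:}0{:}0]$ for $j=1$ and $[0{:}0{:}0{:}1]$ for $j=1,2$). For each $j$ I would select $g_j$ as a specific element of this type of degree less than $\reg(I_{D_j}^m)$, the value of which is given by Theorem \ref{th:regTCC}.

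Verification has two parts: (a) $g_j\in I_{D_j}^{(m)}$ by a direct local calculation at each point of $D_j$, using the parameterization (\ref{verembed}) of the twisted cubic and the transversality of $V(H)$; and (b) $g_j\notin I_{D_j}^m$ by a degree-and-$H$-content count. Fixing $\deg g_j$ determines the admissible bidegrees $(a,b)$ in a purported decomposition $g_j=\sum P_1\cdots P_m$, and counting factors of $H$ on both sides forces a contradiction. In case (1) the Gorenstein resolution of $I_{D_1}$ from Proposition \ref{propreg} and its corollary is what rules out a witness at $m=2$: the symmetry of the Koszul-type relations lets every candidate be reabsorbed into $I_{D_1}^2$, and only from $m=3$ onwards does the obstruction emerge.

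The main obstacle will be step (b): ruling out not just single $m$-fold products but \emph{linear combinations} of such products that could conspire to reproduce $g_j$. As a backup I would replace (b) by a Hilbert-function comparison in a pivotal degree $t^{*}<\reg(I_{D_j}^m)$. Both $\dim(I_{D_j}^{(m)})_{t^{*}}$ and $\dim(I_{D_j}^m)_{t^{*}}$ are computable from the minimal free resolutions given in Propositions \ref{propreg2} and \ref{propreg} together with the description of the fat-points scheme at each point of $D_j$; exhibiting a strict inequality then yields $\sd(I_{D_j},m)>0$ without needing to single out one explicit witness, and this is the route I would pursue first when the direct $H$-content argument becomes combinatorially unwieldy.
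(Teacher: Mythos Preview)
Your overall strategy---produce an explicit witness $g_j\in I_{D_j}^{(m)}\setminus I_{D_j}^m$---is the same as the paper's, but the plan as written has a real gap that the paper's argument closes and yours does not.

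The paper does not try to manufacture a separate witness for each $m$ and then verify non-membership by a degree/$H$-content analysis or a Hilbert-function computation in a pivotal degree. Instead it fixes, once and for all, explicit base elements
\[
f_1=Q_1Q_3H,\qquad f_0=Q_3(x_2-x_3)H,\qquad f_2=Q_3H,\qquad H=x_0^{d-1}-x_3^{d-1},
\]
checks in a single base case (by Zariski--Nagata for the symbolic side, and a direct/computer check for the ordinary side) that $f_1\in I_{D_1}^{(3)}\setminus I_{D_1}^{3}$ and $f_{0,2}\in I_{D_{0,2}}^{(2)}\setminus I_{D_{0,2}}^{2}$, and then \emph{bootstraps} to every $m$ by taking powers $f_j^k$ and, when $m$ is not a multiple of the base exponent, multiplying by one or two of the quadrics $Q_i\in I_{D_j}$. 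The symbolic-power containments follow because $\{I^{(m)}\}$ is a graded family, and the non-containments follow from the base case together with the trivial implication $g\cdot h\in I^{m+1}\Rightarrow g\in I^m$ when $h\in I$. This inductive device is the missing idea in your plan: it collapses infinitely many verifications to a handful.

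By contrast, your route (b)---a ``degree-and-$H$-content count''---does not survive linear combinations, as you yourself note; and the backup Hilbert-function comparison would have to be carried out for every $m$, which you have no mechanism to do uniformly. Your description of the candidates is also too vague to be a proof: saying ``select $g_j$ as a specific element of this type of degree less than $\reg(I_{D_j}^m)$'' is exactly the step that needs content, and it is precisely here that the paper's concrete choices $f_0,f_1,f_2$ (and the observation that $f_j\notin I_{D_j}^{m_0}\Rightarrow f_j^k\notin I_{D_j}^{km_0}$) do the work.
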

\begin{proof} Our proof is based on simply observation that a particular element, different for each case, belongs to $ I_{D_j}^{(m)} \setminus I_{D_j}^m $.
	
For the case 1) consider the polynomial
$$f_1=Q_1Q_3(x_0^{d-1}-x_3^{d-1}).$$
We prove by induction on $k \geq 1$ that
$$f_{1}^k \in I^{(3k)}_{D_{1}} ,\;\; Q_2f_{1}^k \in I^{(3k+1)}_{D_{1}}, \;\; Q_2Q_3f_{1}^k \in I^{(3k+2)}_{D_{1}},$$
while 
\begin{equation}
\label{eq:symb}
	f_{1}^k \not\in I^{3k}_{D_{1}} ,\;\; Q_2f_{1}^k \not\in I^{3k+1}_{D_{1}}, \;\; Q_2Q_3f_{1}^k \not\in I^{3k+2}_{D_{1}}.
\end{equation}
First observe that $f_{1} \in I^{(3)}_{D_{1}}$, which is a straightforward consequence of Zariski-Nagata theorem (see \cite[Theorem 3.14]{Eis95}). Assume for the induction hypothesis that we have $f_{1}^k \in I^{(3k)}_{D_{1}}$, for some $k >1$. Then one can easily check that there are $Q_2f_{1}^k \in I^{(3k+1)}_{D_{1}}, Q_2Q_3f_{1}^k \in I^{(3k+2)}_{D_{1}}$, once again by Zariski-Nagata theorem.	The fact that  $f_{1}^{k+1} \in I^{(3k+3)}_{D_{1}}$ follows from induction hypothesis together with
$$f_{1}^{k}f_{1} \in I^{(3k)}_{D_{1}}I^{(3)}_{D_{1}} \subset  I^{(3k+3)}_{D_{1}},$$
since symbolic powers of any homogeneous ideal $I$ form graded sequence of ideals.

Now we turn to the second part of the proof of 1). It can be checked by any symbolic algebra program, or check by hand, that $f_{1} \not\in I^{3}_{D_{1}}$. Therefore directly from the definition of ordinary power we get 
$$f_{1}^k \not\in I^{3k}_{D_{1}}.$$
Multiplying element $f_{1}^k$ by appropriate $Q_i \in I_{D_1}$ gives the desired assertion (\ref{eq:symb}).

The proof of the case 2) is identical as the case 1), if we instead of taking $f_1$ consider the polynomials
$$f_0=Q_3(x_2-x_3)(x_0^{d-1}-x_3^{d-1}), \;\; f_2=Q_3(x_0^{d-1}-x_3^{d-1}),$$
and proceed by induction on $k\geq1$ in order to show that
$$f_{0,2}^k \in I^{(2k)}_{D_{0,2}} ,\;\; Q_1f_{0,2}^k \in I^{(2k+1)}_{D_{0,2}} ,$$
and
$$f_{0,2}^k \not\in I^{2k}_{D_{0,2}}, \;\; Q_1f_{0,2}^k \not\in I^{2k+1}_{D_{0,2}}.$$
\end{proof}
\begin{remark}
	There is one missing case of $\sd(I_{D_1}, 2)$ in the statement of Theorem \ref{cont}. We expect that $\sd(I_{D_1}, 2)=0$, however we do not have a theoretical proof of this hypothesis.
\end{remark}
Motivated by numerous tests and observations that we made, we want to finish this section with a conjecture that we was not able to prove, but we believe to be true. 
\begin{conjecture}
	Let $D_j $ be a set of $ 3d-j$ general points on a TCC, where $0\leq j \leq 2$. Then  
	\begin{itemize}
		\item[1)]  $ I_{D_j}^{(m)}\subseteq I_{D_j}^r $ if and only if  $ m\geq r+1 $ for any integer $r\geq 2$, in the case  $ j=0,2$.
		\item[2)] $ I_{D_1}^{(m)}\subseteq I_{D_1}^r $ if and only if  $ m\geq r+1 $ for $ r\geq 3 $, and moreover, $ I_{D_1}^{(m)}\subseteq I_{D_1}^2 $ if and only if $ m\geq 2 $.
	\end{itemize}
\end{conjecture}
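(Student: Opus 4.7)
The conjecture naturally splits into a \emph{necessity} direction ($m\leq r\Rightarrow I_{D_j}^{(m)}\not\subseteq I_{D_j}^r$) and a \emph{sufficiency} direction ($m\geq r+1\Rightarrow I_{D_j}^{(m)}\subseteq I_{D_j}^r$), together with the special boundary assertion $I_{D_1}^{(2)}\subseteq I_{D_1}^2$ in case 2). I expect necessity to follow directly from Theorem \ref{cont}, and sufficiency to be the substantive content, with the special boundary case likely requiring its own argument.

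\textbf{Necessity.} Because $I_{D_j}^r\subseteq I_{D_j}^m$ whenever $r\geq m$, it is enough to exhibit, for each $m$ in the claimed range, a polynomial in $I_{D_j}^{(m)}\setminus I_{D_j}^m$. The explicit polynomials $f_j^k$, $Q_if_j^k$, $Q_iQ_{i'}f_j^k$ constructed in the proof of Theorem \ref{cont} already cover every $m\geq 2$ for $j=0,2$ (and every $m\geq 3$ for $j=1$), so this direction reduces to bookkeeping, once one checks that these witnesses still fail to lie in $I_{D_j}^r$ for every $r\leq m$ (which is automatic from $I_{D_j}^r\subseteq I_{D_j}^m$).

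\textbf{Sufficiency.} The strategy I would pursue is to describe the symbolic Rees algebra $\mathcal{R}_s(I_{D_j})=\bigoplus_{m\geq 0}I_{D_j}^{(m)}t^m$ explicitly, in the spirit of Bocci--Harbourne's analysis \cite{BH10} of points on a smooth conic in $\PP^2$. The working hypothesis is that $\mathcal{R}_s(I_{D_j})$ is finitely generated as an $S$-algebra by the generators of $I_{D_j}$ in symbolic degree $1$ together with the polynomials $f_j$ from Theorem \ref{cont} in symbolic degrees $2$ or $3$, and possibly a small number of additional elements forced by the factorizations of $f_j$. Once such a generating set is established, $I_{D_j}^{(m)}\subseteq I_{D_j}^r$ for $m\geq r+1$ reduces to a finite check: every monomial in the generators of total symbolic degree $m$ must lie in $I_{D_j}^r$. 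Since each $f_j$ has the shape $Q_i\cdot g$ with $Q_i\in I_C\subset I_{D_j}$, high-order products of these generators should accumulate enough $I_C$-content to land in the required ordinary power. As a complementary route I would also use the regularity bound $\reg(I_{D_j}^r)\leq rd+1$ from Theorem \ref{th:regTCC} together with a lower bound on $\alpha(I_{D_j}^{(m)})$ obtained by restriction to $C\cong\PP^1$ (a form of degree $k$ vanishing to order $m$ at the $3d-j$ points forces $3k\geq(3d-j)m$), reducing each containment to a finite-degree Hilbert function comparison.

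\textbf{Expected obstacle.} The real difficulty lies in the structural description of $\mathcal{R}_s(I_{D_j})$: finite generation of symbolic Rees algebras of radical ideals of points is notoriously delicate (Nagata-type obstructions exist already in $\PP^2$), and even when it holds, exotic generators in high symbolic degree typically appear and are hard to rule out. For the exceptional boundary case $I_{D_1}^{(2)}\subseteq I_{D_1}^2$ I would work separately, exploiting the Gorenstein structure of $I_{D_1}$ noted after Proposition \ref{propreg}: the symmetric minimal free resolution forces strong constraints on the module $I_{D_1}^{(2)}/I_{D_1}^2$, and a direct Hilbert function computation of both sides should either confirm $\sd(I_{D_1},2)=0$ or produce an explicit generator contradicting it. This case seems the most concretely attackable, and it is where I would start before tackling the general symbolic Rees algebra question.
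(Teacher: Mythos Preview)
The statement you are addressing is a \emph{conjecture} in the paper: the authors explicitly write that they were ``not able to prove'' it and state it only as something they ``believe to be true.'' There is therefore no proof in the paper to compare your proposal against.

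That said, your necessity direction is essentially complete and correct: for each $r$ in the claimed range, Theorem~\ref{cont} supplies an explicit element of $I_{D_j}^{(r)}\setminus I_{D_j}^r$, and since $I_{D_j}^{(m)}\supseteq I_{D_j}^{(r)}$ for $m\leq r$, this same element witnesses $I_{D_j}^{(m)}\not\subseteq I_{D_j}^r$ for every $m\leq r$. This is the part the authors effectively settled; only the sufficiency direction is genuinely open.

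Your sufficiency sketch is a reasonable plan of attack, but it is not a proof, and you are candid about this. The main gap is exactly the one you name: a generating set for the symbolic Rees algebra $\mathcal{R}_s(I_{D_j})$ is not established anywhere in the paper, and proving finite generation with the specific generators you propose would be the heart of the argument. The alternative route via $\alpha(I_{D_j}^{(m)})$ and $\reg(I_{D_j}^r)$ is also plausible but runs into the usual problem that degree bounds alone rarely force containment once the degrees overlap. Your instinct to isolate the boundary case $I_{D_1}^{(2)}=I_{D_1}^2$ and exploit the Gorenstein property is sound; the authors themselves remark that they ``expect that $\sd(I_{D_1},2)=0$'' but ``do not have a theoretical proof,'' so even settling this single case would be progress beyond the paper.
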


\bibliographystyle{abbrv}
\bibliography{rnc} 

\end{document}